\newcommand{\supp}{\mathop{\rm supp}\nolimits}
\newcommand{\cd}{\mathop{\rm CD}\nolimits}
\newcommand{\dr}{\mathop{\rm DR}\nolimits}
\newcommand{\setrel}{\mathrel{}\middle|\mathrel{}}
\newcommand{\id}{\text{id}}
\newcommand{\pr}{\text{pr}}
\newcommand{\mf}{\mathfrak}
\newcommand{\cl}{\mathcal}
\newtheorem{thm}{Theorem}[section]
\newtheorem{prop}[thm]{Proposition}
\newtheorem{lem}[thm]{Lemma}
\newtheorem{cor}[thm]{Corollary}
\newtheorem{remark}[thm]{Remark}
\newtheorem{example}[thm]{Example}
\theoremstyle{definition} 
\newtheorem{defn}[thm]{Definition}
\newtheorem*{assump*}{Assumption A}
\newtheorem*{ack*}{Acknowledgment}
\begin{document}

\title{
{$L^1$-}Monge problem in metric spaces possibly with branching geodesics
}
\author{Shinichiro Kobayashi}
\address{Mathematical Institute, Tohoku University, Sendai 980-8578, Japan}
\email{shin-ichiro.kobayashi.p3@dc.tohoku.ac.jp}
\subjclass[2010]{Primary 49J45; Secondary 49K30, 58B20}
\keywords{optimal transport; Monge problem; Hilbert geometry; projective metric.}
\date{August 2, 2019.} 
\maketitle

\begin{abstract}
In this paper, we consider the Monge optimal transport problem with distance cost.
We prove that in some metric spaces, possibly with many branching geodesics, an optimal transport map exists if the first marginal is absolutely continuous.
The result is applicable to normed spaces and Hilbert geometries.
\end{abstract}

\section{Introduction}
Mass transportation problem has its origin in Monge-Kantorovich \cite{Monge1781, Kantorovich} and is stated as follows.
Let $X$ be a topological space and let $\mu_1$ and $\mu_2$ be two Borel probability measures on $X$.
Let $c\colon X^2\to [0,+\infty]$ be a function and call it a \emph{cost function}.
The Monge mass transportation problem, the Monge problem for short, with the cost function $c$ asks whether the infimum
\begin{equation}\label{Monge}
\inf_{T}\int_{X}c(x,T(x))\, d\mu_1(x) \tag{M}
\end{equation}
is achieved, where $T$ runs over all Borel measurable maps from $X$ to $X$ satisfying $T_{\sharp}\mu_1=\mu_2$.
We call such a map $T$ a \emph{transport map} from $\mu_1$ to $\mu_2$.
The measure $T_{\sharp}\mu_1$ is the push-forward measure of $\mu_1$ under $T$.
We call the objective functional in (\ref{Monge}) the \emph{total cost functional}.
A minimizer in (\ref{Monge}) is called a $c$-\emph{optimal transport map} from $\mu_1$ to $\mu_2$.
The lack of linearity and coercivity of the total cost functional makes the Monge problem difficult.
Kantorovich formulated a relaxation of the problem and overcame these difficulties.
His idea is to use so-called \emph{transport plans}, also known as joint distributions, instead of maps.
The set of transport plans, $\Pi(\mu_1,\mu_2)$, is defined by
\[
\Pi(\mu_1,\mu_2):=\left\{ \pi\in\cl{P}({X^2}) \setrel (\pr_i)_{\sharp}\pi=\mu_i, i=1,2\right\},
\]
where the map $\pr_i\colon X^2\to X$ is the canonical projection for $i=1,2$ and the set $\cl{P}(X^2)$ denotes the set of all Borel probability measures on $X^2$.
Kantorovich proposed to study the attainability of the infimum
\begin{equation}\label{Kantorovich}
\inf_{\pi\in\Pi(\mu_1,\mu_2)}\int_{X^2}c(x,y)\, d\pi(x,y). \tag{K}
\end{equation}
A minimizer in (\ref{Kantorovich}) is called a $c$-\emph{optimal transport plan} from $\mu_1$ to $\mu_2$.
If $X$ is a Polish space and if the cost function $c$ is lower semi-continuous, then an optimal transport plan  exists.
In order to obtain an optimal transport map, it is sufficient to see that a certain optimal transport plan is induced by a map.

Let  $(X,d)$ be a complete separable metric space. We focus on the case that the cost function is the distance function $d$.
Under a certain lower bound condition of curvature, the existence of an optimal transport map is proved by many researchers.
For metric spaces without non-branching geodesics, see \cite{BiCa11, BiCa13, Cava17}.
For normed spaces, see e.g., \cite{AKP04, AP03, CP11}.

In this paper, inspired by the work of Champion-Pascale \cite{CP11},
we consider the Monge problem on a metric space $(\Omega,\rho)$, where $\Omega$ is a bounded convex $G_{\delta}$-set in an Euclidean space with non-empty interior and $\rho$ is a metric on $\Omega$ satisfying some projectivity conditions.
\begin{assump*}
We assume the following three conditions on a metric $\rho$ on $\Omega$.
\begin{enumerate}
\renewcommand{\labelenumi}{{\rm(\roman{enumi})}}
\item The topology induced by $\rho$ coincides with  the Euclidean one.
\item Any line segment is a geodesic in $(\Omega,\rho)$.
\item The $n$-dimensional Lebesgue measure $\cl{L}^n$ on $(\Omega,\rho)$ is locally doubling.
\end{enumerate}
\end{assump*}
Note that we do not assume that $(\Omega,\rho)$ is non-branching. Under these settings, we have the following main theorem.
\begin{thm}\label{thm: main theorem}
Let $\Omega$ be a bounded convex $G_{\delta}$-set in the $n$-dimensional Euclidean space $\mathbb{R}^n$
with non-empty interior and $\rho$ be a metric on $\Omega$ satisfying {\rm Assumption A}. Let $\mu_1$ and $\mu_2$ be two Borel probability measures on $\Omega$ with compact support.
If $\mu_1$ is absolutely continuous with respect to the $n$-dimensional Lebesgue measure $\cl{L}^n$, then there exists a $\rho$-optimal transport map from $\mu_1$ to $\mu_2$.
\end{thm}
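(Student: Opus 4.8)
The plan is to adapt the variational strategy of Champion--De Pascale \cite{CP11}: rather than attempting a Sudakov-type disintegration along transport rays---which breaks down precisely when geodesics branch---I would single out one distinguished optimal plan and prove directly that it is carried by a graph, using a Lebesgue-density argument valid in the metric measure space $(\Omega,\rho,\cl{L}^n)$.

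First, since $\rho$ induces the Euclidean topology by (i) and $\mu_1,\mu_2$ have compact support, the cost $\rho$ is continuous and bounded on $\supp\mu_1\times\supp\mu_2$; hence the Kantorovich problem \eqref{Kantorovich} admits a minimizer and the set $\Pi_{\opt}(\mu_1,\mu_2)$ of $\rho$-optimal plans is a nonempty, compact, convex subset of $\cl{P}(\Omega^2)$. Kantorovich duality then provides a $\rho$-Lipschitz potential $u$ with $u(x)-u(y)\le\rho(x,y)$ everywhere and equality $\pi$-a.e.\ for every $\pi\in\Pi_{\opt}(\mu_1,\mu_2)$. Condition (ii), that every line segment is a $\rho$-geodesic, is what turns this into usable geometry: whenever $u(x)-u(y)=\rho(x,y)$, the additivity of $\rho$ along the segment $[x,y]$ forces $u$ to decrease at unit $\rho$-speed along the whole segment, so $[x,y]$ is a transport ray carrying an intrinsic direction of decrease.

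Next I would break the severe nonuniqueness inherent to the distance cost---mass may be rearranged freely within each ray---by a secondary variational problem: among all plans in $\Pi_{\opt}(\mu_1,\mu_2)$, minimize the strictly convex functional $\pi\mapsto\int_{\Omega^2}|x-y|^2\,d\pi$, where $|\cdot|$ denotes the Euclidean norm. A minimizer $\bar\pi$ exists by compactness and lower semicontinuity, and strict convexity endows $\bar\pi$ with a monotonicity along the transport directions that a generic optimal plan lacks. The theorem then reduces to showing that $\bar\pi=(\id\times T)_{\sharp}\mu_1$ for some Borel map $T$, i.e.\ that the conditional measure $\bar\pi_x$ of the disintegration of $\bar\pi$ along the first factor is a Dirac mass for $\mu_1$-a.e.\ $x$.

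The heart of the matter, and the step where branching must be confronted, is this last claim. On compact subsets of $\Omega$ the potential $u$ is locally Lipschitz in the Euclidean sense as well, hence differentiable $\cl{L}^n$-a.e., and at a differentiability point interior to a ray the ray direction is pinned down by $\nabla u$. I would argue that the set of points that are endpoints of rays, or at which two rays with distinct directions meet, is $\cl{L}^n$-negligible and therefore, by the absolute continuity of $\mu_1$, also $\mu_1$-negligible. In the classical non-branching theory one uses that distinct transport rays can meet only at endpoints; here two maximal rays may share a common subsegment before diverging, so no clean foliation is available. To bypass this I would run a blow-up argument at a.e.\ $x$: condition (iii), local doubling of $\cl{L}^n$ on $(\Omega,\rho)$, validates the Lebesgue differentiation theorem, so the infinitesimal behaviour of $\bar\pi$ near $x$ must respect the unique differentiability direction of $u$, and the secondary monotonicity then forbids $\bar\pi_x$ from splitting its mass into two distinct directions. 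This forces $\bar\pi_x$ to be Dirac and produces $T$. The main obstacle is exactly this recasting of the ray decomposition as an a.e.\ differentiation statement: dropping the non-branching hypothesis costs us the partition into rays, and it is the local doubling condition (iii) that must carry the full weight of the blow-up analysis in place of it.
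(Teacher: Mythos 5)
Your setup---existence of optimal plans, a Kantorovich potential, the secondary minimization of $\int_{\Omega^2}|x-y|^2\,d\pi$ among $\rho$-optimal plans, and the use of condition (ii) to turn the equality $u(x)-u(y)=\rho(x,y)$ into geometry along segments---matches the paper's (SVP1) and the restricted-monotonicity idea. The gap is in the core step, which you delegate to a blow-up/differentiability argument. First, under Assumption A the potential is only $1$-Lipschitz for $\rho$; topological equivalence with the Euclidean topology gives no Euclidean Lipschitz bound, so a.e.\ differentiability of $u$ is not available in general, and even where $\nabla u$ exists it does not pin down a transport direction: with branching geodesics (non-strictly-convex norms, Hilbert metrics of polytopes) many directions $y$ can satisfy $u(x)-u(y)=\rho(x,y)$ at a differentiability point, and the set of points lying on several rays with distinct directions need not be $\cl{L}^n$-negligible. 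This is precisely the obstruction that forces the Champion--De Pascale route rather than a ray-decomposition or gradient argument, so the claim that ``the infinitesimal behaviour of $\bar\pi$ near $x$ must respect the unique differentiability direction of $u$'' would fail.

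Second, you never supply the mechanism that makes the density argument work. The paper does not blow up a general secondary minimizer: it selects the smaller class $\cl{O}_2(\mu_1,\mu_2)$ of plans obtained as limits of minimizers of the penalized functionals $C_{\varepsilon}$, whose second marginals are finitely supported. It is this discreteness, combined with the $c_{\varepsilon}$-cyclical monotonicity and strict convexity of $c_{\varepsilon}$, that yields the pairwise disjointness of the interpolated sets and hence the $L^{\infty}$ bound on $(P_t)_{\sharp}(\pi_{\varepsilon}\lfloor_B)$ (Lemma \ref{lem: densitybound}), which passes to the limit and gives the positive-density estimate at density-regular points (Proposition \ref{prop: density regularity}); restricted monotonicity applied to an arbitrary element of $\cl{O}_1(\mu_1,\mu_2)$ does not by itself forbid the conditional measures from splitting. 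Finally, you misplace the role of condition (iii): Lebesgue differentiation for $\cl{L}^n$ with Euclidean balls needs no doubling hypothesis; in the paper (iii) enters through Lemma \ref{lem: numberofnets}, bounding the cardinality of $\rho$-nets of $\supp\mu_2$ so that the penalty term $\varepsilon^{3d+2}\#\supp((\pr_2)_{\sharp}\pi)$ is compatible with the limit $\varepsilon\to 0$ (Lemma \ref{well-definedness of O2}). Without these ingredients your final claim that the conditional measures are Dirac is asserted rather than proved.
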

The idea of the proof of Theorem \ref{thm: main theorem} is based on \cite{CP11}.
We modify a variational approximation of transport plans, introduced in \cite{CP11}, and use it.
To see that our approximation scheme works well, we need an appropriate grid for a projective metric.
Unlike the case of normed spaces, it is not clear that there exists such a grid.
If we assume (iii), then this difficulty is resolved.
In this case, the doubling dimension may be different from $n$.
Note that our approximation scheme depends on the doubling dimension.

The Hilbert metric on an bounded convex open set in an Euclidean space satisfies Assumption A.
The Hilbert metric is a generalization of the Cayley-Klein model of the hyperbolic geometry.
For a bounded convex open set $\Omega$ in an Euclidean space endowed with the Hilbert metric is called the Hilbert geometry for $\Omega$. 
Under some regularity assumptions on the boundary of a domain, the Hilbert geometry is regarded as a Finsler manifold of constant flag curvature $-1$.
We obtain the existence result of an optimal transport map for a Hilbert geometry as a corollary. 
\begin{cor}\label{cor: Hilb geom}
Let $\Omega\subset \mathbb{R}^n$ be a bounded convex open set and $h_{\Omega}$ the Hilbert metric on $\Omega$.
Let $\mu_1$ and $\mu_2$ be two Borel probability measures on $\Omega$ with compact support.
If $\mu_1$ is absolutely continuous with respect to the $n$-dimensional Lebesgue measure $\cl{L}^n$, then there exists an $h_{\Omega}$-optimal transport map from $\mu_1$ to $\mu_2$.
\end{cor}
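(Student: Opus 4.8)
The plan is to deduce Corollary \ref{cor: Hilb geom} directly from Theorem \ref{thm: main theorem} by verifying that the Hilbert metric $h_\Omega$ satisfies the three conditions of Assumption A. Since $\Omega$ is a bounded convex \emph{open} set, it is in particular a bounded convex $G_\delta$-set with non-empty interior, so the geometric hypotheses on the domain hold automatically; only the three metric conditions (i)--(iii) remain to be checked. I note in passing that when $\partial\Omega$ contains line segments the Hilbert geometry is genuinely branching, which is exactly why the general Theorem \ref{thm: main theorem} (rather than a non-branching result) is needed here.

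For (i) and (ii) I would argue from the cross-ratio definition. Recall that $h_\Omega(x,y)$ is (one half of) the logarithm of the cross-ratio of the four collinear points $a,x,y,b$, where $a,b$ are the two intersection points of the straight line through $x,y$ with $\partial\Omega$. Because the cross-ratio is multiplicative under concatenation of collinear points, the log-cross-ratio is additive: if $z$ lies on the segment $[x,y]$ then $h_\Omega(x,z)+h_\Omega(z,y)=h_\Omega(x,y)$. This additivity shows that every straight line segment is a length-minimizing geodesic, which is (ii). The same formula shows $h_\Omega$ is finite and continuous on $\Omega\times\Omega$, with $h_\Omega(x,y)\to 0$ precisely when $|x-y|\to 0$ for points staying in the interior; hence $h_\Omega$ induces the Euclidean topology, which is (i).

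The main point, and the step I expect to be the real content, is condition (iii): the local doubling of $\cl{L}^n$ with respect to $h_\Omega$-balls. My tool here is the infinitesimal comparison of the Hilbert and Euclidean metrics. The Hilbert metric is Finsler, with norm at $x$ in direction $v$ given by
\[
F_\Omega(x,v)=\frac{|v|}{2}\left(\frac{1}{t_+(x,v)}+\frac{1}{t_-(x,v)}\right),
\]
where $t_\pm(x,v)$ are the distances from $x$ to $\partial\Omega$ along the rays $x\pm\mathbb{R}_{\geq 0}v$. Fix any interior point and choose a compact neighborhood $K\subset\Omega$ bounded away from $\partial\Omega$; on $K$ the quantities $t_\pm$ are bounded above by $\diam(\Omega)$ and below by the positive distance from $K$ to $\partial\Omega$, so there are constants $0<c\le C<\infty$ with $c|v|\le F_\Omega(x,v)\le C|v|$ for all $x\in K$. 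Consequently $h_\Omega$ and the Euclidean distance are bi-Lipschitz equivalent on $K$, so each small $h_\Omega$-ball is trapped between two Euclidean balls of comparable radii. Since $\cl{L}^n$ is doubling with respect to Euclidean balls, the bi-Lipschitz comparison transfers this to $h_\Omega$-balls and yields (iii).

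Having verified (i)--(iii), Theorem \ref{thm: main theorem} applies verbatim and produces an $h_\Omega$-optimal transport map from $\mu_1$ to $\mu_2$. The hard part, as indicated, is (iii): the degeneration of $h_\Omega$ near $\partial\Omega$ (where $t_\pm\to 0$ and distances blow up) means the bi-Lipschitz estimate is genuinely local and cannot be made uniform over all of $\Omega$. This is harmless, however, precisely because (iii) asks only for \emph{local} doubling, and every point of the open domain $\Omega$ admits a compact neighborhood of the required type.
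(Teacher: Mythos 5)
Your reduction to Theorem \ref{thm: main theorem} is the right strategy, and your treatment of the domain (an open set is $G_\delta$) and of conditions (i) and (ii) via additivity of the logarithmic cross-ratio along collinear points is fine; this is exactly what the paper asserts without further comment. The problem is your verification of (iii). In the paper, ``locally doubling'' (Definition 2.6) means: for every $R>0$ there is \emph{one} constant $C_R$ such that $\cl{L}^n(B(x,2r))\leq C_R\,\cl{L}^n(B(x,r))$ for \emph{all} $x\in\Omega$ and all $0<r\leq R$. The locality is only in the scale $R$, not in the base point. Your bi-Lipschitz comparison $c|v|\leq F_\Omega(x,v)\leq C|v|$ holds only on compact sets $K$ bounded away from $\partial\Omega$, with $C/c\to\infty$ as $K$ exhausts $\Omega$, so it yields doubling constants that blow up near the boundary. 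You acknowledge this and then argue it is ``harmless'' because (iii) ``asks only for local doubling''--- but that reads the definition as pointwise-local (each point has a neighborhood where doubling holds), which is strictly weaker than what Assumption A (iii) states. As written, your argument does not verify the hypothesis of Theorem \ref{thm: main theorem}; the genuinely hard content of (iii) is precisely the uniform behaviour of Hilbert balls centered arbitrarily close to $\partial\Omega$, where the metric degenerates, and no interior Finsler comparison reaches it.

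The paper closes this gap by a different route: it invokes Ohta's theorem that $(\Omega,h_\Omega,\cl{L}^n)$ satisfies the curvature-dimension condition $\cd(K,N)$ with $K=-(n-1)-\tfrac{(n+1)^2}{N-n}$ and $N>n$, and then uses the fact that a $\cd(K,N)$ space with finite $N$ has a locally doubling reference measure (via Bishop--Gromov volume comparison), which is uniform in the center and hence gives Assumption A (iii) on all of $\Omega$, boundary behaviour included. If you want to salvage your approach, you would either need to prove uniform volume estimates for Hilbert balls near $\partial\Omega$ directly (this is a known but nontrivial fact), or argue separately that the proof of Theorem \ref{thm: main theorem} only ever uses doubling for balls centered in a compact neighborhood of $\supp\mu_2$ --- but that is a modification of the theorem's hypotheses, not a verification of them, and is not what your proposal does.
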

\begin{ack*}
The author would like to thank Professor Takashi Shioya for helpful comments.
\end{ack*}

\section{Preliminaries}

\subsection{General facts from optimal transport theory}
In this section, we recall some basics on optimal transport theory.
We refer to \cite{AG13, Vil09} for more details.
For a topological space $X$, denote by $\cl{P}(X)$ the set of all Borel probability measures on $X$.
\begin{defn}
For two topological spaces $X$ and $Y$, a Borel measurable map $f\colon X\to Y$ and a Borel probability measure $\mu\in\cl{P}(X)$, 
we define the \emph{push-forward measure} $f_{\sharp}\mu$ of $\mu$ under $f$ by
\[
f_{\sharp}\mu(B):=\mu\left(f^{-1}(B)\right)
\]
for any Borel set $B$ in $Y$.
\end{defn}
Let $\mu_1, \mu_2\in \mathcal{P}(X)$ be two Borel probability measures.
For a Borel measurable map $T\colon X\to X$, we say that $T$ is a \emph{transport map} from $\mu_1$ to $\mu_2$ if
$T_{\sharp}\mu_1=\mu_2$ holds. The set of all transport maps from $\mu_1$ to $\mu_2$ is denoted by $\mathcal{M}(\mu_1,\mu_2)$.
For a Borel probability measure $\pi\in \mathcal{P}(X^2)$, we say that $\pi$ is a \emph{transport plan} from $\mu_1$ to $\mu_2$ if
$(\pr_i)_{\sharp}\pi=\mu_i$ for $i=1,2$ holds, where the map $\pr_i\colon X^2\to X$ is the canonical projection for $i=1,2$.
In this case, we say that $\mu_1$ is \emph{the first marginal of $\pi$} and that $\mu_2$ is \emph{the second marginal of $\pi$}.
The set of all transport plans from $\mu_1$ to $\mu_2$ is denoted by $\Pi(\mu_1,\mu_2)$.
For a transport map $T\in\cl{M}(\mu_1,\mu_2)$, we denote $(\id, T)_{\sharp}\mu_1$ by $\pi_T$, where $(\id, T)\colon X\to X^2$ is the map that assigns each $x\in X$ to $(x, T(x))$.

\begin{defn}
Let $c\colon X^2\to [0,+\infty]$ be a function, which we call a \emph{cost function}. A transport map $T_0\in \mathcal{M}(\mu,\mu_2)$ is a \emph{$c$-optimal transport map} if it minimizes 
\[
I_{\mathrm{M}}(T):=\int_{X}c(x,T(x))\, d\mu(x),\, T\in\mathcal{M}(\mu,\mu_2).
\]
A transport plan $\pi_0\in\Pi(\mu_1,\mu_2)$ is a \emph{$c$-optimal transport plan} if it minimizes 
\[
I_{\mathrm{K}}(\pi):= \int_{X^2}c(x,y)\, d\pi(x,y),\, \pi\in \Pi(\mu_1,\mu_2).
\]
\end{defn}
The minimization problem for $I_M$ and $I_K$ is called the Monge problem and the Kantorovich problem for the cost function $c$, respectively.
By a direct argument, the existence of a $c$-optimal transport plan is ensured if $X$ is a Polish space and if $c$ is lower semi-continuous.
Kantorovich problem admits a dual formulation. We recall the notion of $c$-transform and $c$-concavity.
\begin{defn}
Let $\varphi\colon X\to [-\infty,+\infty]$ be a function.
\begin{enumerate}
\item The \emph{$c$-transform} $\varphi^c$ of $\varphi$ is defined by
\[
\varphi^c(y):= \inf_{x\in X}(c(x,y)-\varphi(x)),\, y\in X.
\]
\item $\varphi$ is \emph{$c$-concave} if there exists a function $\psi\colon X\to[-\infty,+\infty]$ on $X$ such that $\varphi=\psi^c$ holds.
\end{enumerate}
\end{defn}

\begin{example}
Let $(X,d)$ be a metric space and $\varphi\colon X\to\mathbb{R}$ a function.
Then, $\varphi$ is $d$-concave if and only if $\varphi$ is $1$-Lipschitz continuous.
\end{example}

\begin{defn}
A subset $\Gamma\subset X^2$ is said to be \emph{c-cyclically monotone} if
for any finitely many  points $(x_1,y_1),\ldots,(x_n,y_n)\in \Gamma$, we have
\[
\sum_{i=1}^n c(x_i,y_i)\leq \sum_{i=1}^n c(x_i,y_{i+1}),
\]
where $y_{n+1}:=y_1$. A Borel probability measure $\pi$ on $X^2$ is said to be \emph{$c$-cyclically monotone}
if $\pi$ is concentrated on a $c$-cyclically monotone set.
\end{defn}

In the case that $(X,d)$ is the $n$-dimensional Euclidean space $\mathbb{R}^n$ and $c$ is the square of the Euclidean metric, the $c$-cyclical monotonicity of $\Gamma$ yields the monotonicity of $\Gamma$, i.e., for any $(x,y), (x',y')\in \Gamma$, we have
\[
(y-y')\cdot (x-x') \geq 0,
\]
where the dot is the Euclidean inner product of $\mathbb{R}^n$.
Next we recall the notion of Kantorovich potential.
Denote by $\Phi_c$ the set of all pairs of functions $(\varphi,\psi)\in L^1(X,\mu_1)\times L^1(X,\mu_2)$ such that
\[
\varphi(x)+\psi(y)\leq c(x,y)
\]
holds for $\mu_1$-almost all $x\in X$ and $\mu_2$-almost all $y\in X$.
Denote by $\Psi_c$ the set of all pairs of functions $(\varphi,\psi)\in C_{b}(X)\times C_{b}(X)$ such that
\[
\varphi(x)+\psi(y)\leq c(x,y)
\]
holds for all $x, y\in X$.
For $(\varphi,\psi)\in\Phi_c$, we define $J(\varphi,\psi)$ by
\[
J(\varphi,\psi):=\int_{X}\varphi\, d\mu_1 + \int_{X}\psi\, d\mu_2.
\]
We call every $(\varphi,\psi)\in\Phi_c$ maximizing  $J$ a \emph{Kantorovich potential}.
The following theorem is known as the Kantorovich duality.
\begin{thm}[Kantorovich duality]\label{KRduality}
Let $c\colon X^2 \to [0,+\infty]$ be a lower semi-continuous function. Then we have the equalities
\[
\min_{\pi\in\Pi(\mu_1,\mu_2)}I_{\mathrm{K}}(\pi)
=\sup_{(\varphi,\psi)\in \Phi_c}J(\varphi,\psi)
=\sup_{(\varphi,\psi)\in \Psi_c}J(\varphi,\psi).
\]
\end{thm}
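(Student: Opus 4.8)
The plan is to sandwich the three quantities by establishing the elementary chain $\sup_{\Psi_c}J\le\sup_{\Phi_c}J\le\min_{\Pi}I_{\mathrm K}$ and then closing the loop with the single nontrivial inequality $\min_{\Pi}I_{\mathrm K}\le\sup_{\Psi_c}J$. First I would check that the minimum on the left is attained: since $\mu_1,\mu_2$ are Borel probability measures on the Polish space $X$ they are tight, so $\Pi(\mu_1,\mu_2)$ is tight and, by Prokhorov's theorem, relatively compact for the narrow topology; it is also narrowly closed, and $I_{\mathrm K}$ is narrowly lower semi-continuous because $c\ge0$ is lower semi-continuous, so the direct method produces a minimizer. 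The two easy inequalities are then immediate from the definitions. Every $(\varphi,\psi)\in\Psi_c$ lies in $\Phi_c$, because a bounded continuous function is $\mu_i$-integrable and the pointwise inequality is a fortiori an almost-everywhere one, whence $\sup_{\Psi_c}J\le\sup_{\Phi_c}J$; and for weak duality, fixing $(\varphi,\psi)\in\Phi_c$ and $\pi\in\Pi(\mu_1,\mu_2)$ and using $(\pr_i)_\sharp\pi=\mu_i$,
\[
J(\varphi,\psi)=\int_{X^2}\bigl(\varphi(x)+\psi(y)\bigr)\,d\pi(x,y)\le\int_{X^2}c(x,y)\,d\pi(x,y)=I_{\mathrm K}(\pi),
\]
so taking the supremum over $(\varphi,\psi)$ and the infimum over $\pi$ gives $\sup_{\Phi_c}J\le\min_{\Pi}I_{\mathrm K}$.

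The substance is the reverse inequality $\min_{\Pi}I_{\mathrm K}\le\sup_{\Psi_c}J$, which I would derive from the Fenchel--Rockafellar duality theorem. Assume first that $X$ is compact and $c$ is continuous, and work in the Banach space $C(X^2)$ with the supremum norm. Define two convex functionals: $\Theta(u)=0$ if $u\le c$ everywhere on $X^2$ and $\Theta(u)=+\infty$ otherwise, and $\Xi(u)=-\int_X\varphi\,d\mu_1-\int_X\psi\,d\mu_2$ if $u(x,y)=\varphi(x)+\psi(y)$ for some $\varphi,\psi\in C(X)$ and $\Xi(u)=+\infty$ otherwise. The constant function $u\equiv-1$ is a point of continuity of $\Theta$ (since $c\ge0$) at which $\Xi$ is finite, so the qualification hypothesis of the theorem holds. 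Computing the Legendre transforms via the Riesz representation theorem identifies the dual variable with a signed Radon measure that the transform of $\Theta$ forces to be nonnegative and the transform of $\Xi$ forces to have first and second marginals $\mu_1$ and $\mu_2$; the associated value is exactly $\int_{X^2}c\,d\pi$. Hence Fenchel--Rockafellar yields $\sup_{\Psi_c}J=\min_{\Pi}I_{\mathrm K}$ in this case.

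I expect the main obstacle to lie in removing the two simplifying assumptions. To pass from a continuous cost to a merely lower semi-continuous one, I would write $c=\sup_k c_k$ as an increasing pointwise limit of bounded Lipschitz functions $c_k$, apply the compact case to each $c_k$, and let $k\to\infty$, using monotone convergence on the side of $I_{\mathrm K}$ and the inclusion $\Psi_{c_k}\subset\Psi_c$ on the side of $J$. To remove compactness, I would use the inner regularity of $\mu_1$ and $\mu_2$ on the Polish space $X$ to exhaust by compact sets and control the contribution of the tails, again passing to the limit. Verifying the qualification condition of Fenchel--Rockafellar and correctly identifying the two Legendre transforms with the nonnegativity and the marginal constraints is the delicate point where the argument must be executed with care.
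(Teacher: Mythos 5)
The paper itself gives no proof of this theorem: it is recalled as a classical result in the preliminaries, with the reader referred to \cite{AG13} and \cite{Vil09}, and the proof in those references is exactly the one you outline. Your easy chain $\sup_{\Psi_c}J\le\sup_{\Phi_c}J\le\min_{\Pi}I_{\mathrm K}$, the attainment of the minimum via tightness, Prokhorov and narrow lower semicontinuity of $I_{\mathrm K}$, and the Fenchel--Rockafellar argument on $C(X^2)$ in the compact continuous case (with the constant function $u\equiv -1$ as the continuity point giving the qualification condition, and the Riesz representation identifying the dual variable as a nonnegative measure with the prescribed marginals) are all correct and constitute the standard route, so you are following the same approach as the paper's sources rather than a genuinely different one.

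One execution point deserves care: the order of your two limiting procedures. You propose to pass from continuous to lower semi-continuous costs while $X$ is still compact, and only afterwards to remove compactness by exhaustion. In that second step the cost is a possibly unbounded lsc function, and there the tail control genuinely degenerates: the mass $\delta$ of $\mu_i$ outside the compact set $K$ can carry arbitrarily large cost, and a pair $(\varphi,\psi)$ feasible on $K^2$ need not extend to a pair in $\Psi_c$ on all of $X^2$, since the constraint $\varphi(x)+\psi(y)\le c(x,y)$ must hold globally and $c$-transforms of extensions need not remain bounded or continuous. The standard resolution, and the one in the cited references, is to swap the two steps: first remove compactness for bounded (uniformly continuous or Lipschitz) costs, where the tail error is controlled by $\delta\|c_k\|_{\infty}$ and the potentials can be truncated; then write $c=\sup_k c_k$ with $c_k$ bounded Lipschitz increasing to $c$ and pass to the limit by taking optimal plans $\pi_k$ for $c_k$, extracting a narrow limit $\pi$ by tightness of $\Pi(\mu_1,\mu_2)$, and using $\int c_m\,d\pi=\lim_k\int c_m\,d\pi_k\le\liminf_k\int c_k\,d\pi_k$ followed by monotone convergence in $m$; note that monotone convergence applied to a fixed plan, as your sketch suggests, is not by itself sufficient because the optimal plans change with $k$. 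With this reordering your outline is the complete classical proof.
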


\begin{thm}
Let $c\colon X^2\to [0,+\infty]$ be a lower semi-continuous function.
If $\pi\in\Pi(\mu_1,\mu_2)$ is optimal and if $I_{\mathrm{K}}(\pi)$ is finite, then $\pi$ is concentrated on a $c$-cyclically monotone set.
Moreover, if $c$ is real-valued, then there exists a $c$-cyclically monotone set $\Gamma\subset X^2$ such that for any $\pi\in\Pi(\mu_1,\mu_2)$, the following are equivalent to each other.
\begin{enumerate}
\item $\pi$ is a $c$-optimal transport plan from $\mu_1$ to $\mu_2$.
\item $\pi$ is $c$-cyclically monotone.
\item There exists a $c$-concave Borel measurable function $\varphi\colon X\to \mathbb{R}$ such that 
for $\pi$-almost every point \ $(x,y)\in X\times Y$, we have
\[
\varphi(x)+\varphi^c(y)=c(x,y).
\]
\item $\pi$ is concentrated on $\Gamma$.
\end{enumerate}
\end{thm}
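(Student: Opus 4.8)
The plan is to follow the classical route of \cite{Vil09, AG13} linking optimality, $c$-cyclical monotonicity, and Kantorovich potentials. First I would treat the opening assertion. Fix an optimal $\pi$ with $I_{\mathrm{K}}(\pi)<+\infty$ and argue by contradiction: if no $c$-cyclically monotone set carried $\pi$, then for some $N$ the set of $N$-tuples $((x_1,y_1),\dots,(x_N,y_N))$ with $\sum_{i=1}^N c(x_i,y_{i+1})<\sum_{i=1}^N c(x_i,y_i)$ would have positive $\pi^{\otimes N}$-measure. Using inner regularity I would localize such a configuration on disjoint compact boxes $K_1,\dots,K_N$, disintegrate $\pi$ over the first coordinate there, and build a competitor $\tilde\pi$ agreeing with $\pi$ off $\bigcup_i K_i$ while cyclically re-routing a small amount of mass $\varepsilon$, matching the $x$-part over $K_i$ with the $y$-part over $K_{i+1}$ rather than over $K_i$. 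The marginals are preserved by construction, whereas the total cost strictly decreases for small $\varepsilon>0$, contradicting optimality. The finiteness of $I_{\mathrm{K}}(\pi)$ and the lower semicontinuity of $c$ are exactly what make the swap admissible and the cost difference well defined; this localization-and-swap step is the main technical obstacle.

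For the equivalences I would first produce a single set $\Gamma$. By lower semicontinuity and tightness (Prokhorov) a $c$-optimal $\bar\pi$ exists, and by the first part it is concentrated on a $c$-cyclically monotone set $\Gamma_0$. On $\Gamma_0$ I apply the Rockafellar--R\"uschendorf construction to obtain a $c$-concave $\varphi$ with $\varphi(x)+\varphi^c(y)=c(x,y)$ on $\Gamma_0$, and set $\Gamma:=\{(x,y)\in X^2\mid \varphi(x)+\varphi^c(y)=c(x,y)\}$. This $\Gamma$ is $c$-cyclically monotone, and since $c$ is real-valued the integrability $\varphi\in L^1(\mu_1)$, $\varphi^c\in L^1(\mu_2)$ can be arranged, so that $(\varphi,\varphi^c)\in\Phi_c$ satisfies $J(\varphi,\varphi^c)=I_{\mathrm{K}}(\bar\pi)=\min_{\Pi}I_{\mathrm{K}}$ by Theorem \ref{KRduality}; thus $(\varphi,\varphi^c)$ is a Kantorovich potential.

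Finally I would close the loop of implications using this fixed $\Gamma$ and $\varphi$. For (1) $\Rightarrow$ (4): if $\pi$ is optimal, then $\int(c-\varphi-\varphi^c)\,d\pi=I_{\mathrm{K}}(\pi)-J(\varphi,\varphi^c)=0$ with a nonnegative integrand, so $\pi$ is concentrated on $\Gamma$. The implication (4) $\Rightarrow$ (2) is immediate since $\Gamma$ is $c$-cyclically monotone. For (2) $\Rightarrow$ (3) I repeat the Rockafellar--R\"uschendorf construction on a $c$-cyclically monotone carrier of $\pi$ to produce the required $c$-concave $\varphi$, with the contact identity holding $\pi$-almost everywhere. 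For (3) $\Rightarrow$ (1): given such $\varphi$, the pair $(\varphi,\varphi^c)\in\Phi_c$ yields $I_{\mathrm{K}}(\pi)=\int(\varphi(x)+\varphi^c(y))\,d\pi=J(\varphi,\varphi^c)\le\min_{\Pi} I_{\mathrm{K}}$, forcing $\pi$ to be optimal. Beyond the swap argument of the first part, I expect the integrability of the potentials---needed to invoke Theorem \ref{KRduality} with equality---to be the only remaining delicate point.
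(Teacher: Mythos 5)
The paper does not prove this statement at all: it is recalled as a standard fact from the optimal transport literature (see \cite{Vil09}*{Theorem 5.10} and \cite{AG13}), so there is no internal proof to compare with; your outline reproduces the classical route of those references (swap argument for necessity of cyclical monotonicity, Rockafellar--R\"uschendorf construction of a potential, closing the loop through duality), which is the intended source. As a proof, however, two steps in your sketch are genuine gaps rather than routine verifications. First, in the opening argument the implication ``$\pi$ is not concentrated on any $c$-cyclically monotone set $\Rightarrow$ for some $N$ the bad set of $N$-tuples has positive $\pi^{\otimes N}$-measure'' is not immediate when $c$ is only lower semi-continuous: a $\pi^{\otimes N}$-null set cannot in general be avoided by the $N$-fold product of a single full-measure set, so the contrapositive you need (all bad tuple-sets null $\Rightarrow$ existence of a monotone carrier) requires a separate construction. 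The standard proofs circumvent this either by assuming $c$ continuous and working on $\supp\pi$, where a violating tuple propagates to neighborhoods of positive measure and the swap yields a uniform gain, or (for l.s.c.\ $c$, as in Villani's proof) by extracting the carrier from the duality theorem: one takes pairs $(\psi_k,\psi_k^c)$ with $\int (c-\psi_k-\psi_k^c)\,d\pi\to 0$ and shows that the set where $\psi_k\oplus\psi_k^c\to c$ along a subsequence is $c$-cyclically monotone and of full $\pi$-measure. Your swap construction itself is fine, but it only proves ``bad set of positive measure $\Rightarrow$ not optimal,'' which is not yet the first assertion.

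Second, the integrability you say ``can be arranged'' is precisely the crux, not a footnote. For a merely real-valued l.s.c.\ cost and arbitrary marginals, the $c$-concave $\varphi$ produced on a monotone carrier need not satisfy $\varphi\in L^1(\mu_1)$, $\varphi^c\in L^1(\mu_2)$ (it need not even be real-valued off the carrier), so $(\varphi,\varphi^c)\in\Phi_c$ and the identities $\int(c-\varphi\oplus\varphi^c)\,d\pi=I_{\mathrm{K}}(\pi)-J(\varphi,\varphi^c)$ used in your (1)$\Rightarrow$(4) and (3)$\Rightarrow$(1) are not available as written. The literature repairs this either by adding the hypothesis $c(x,y)\le a(x)+b(y)$ with $a\in L^1(\mu_1)$, $b\in L^1(\mu_2)$ (as in \cite{AG13}*{Theorem 1.13}), or by assuming the optimal cost finite and replacing the integration of the potentials by a truncation/restriction argument in the spirit of Schachermayer--Teichmann (this is how Villani closes (2)$\Rightarrow$(1) without integrability). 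Note also that the theorem as stated silently needs finiteness of the minimal cost: if every plan has infinite cost, (1) holds vacuously for all plans while (2)--(4) do not. None of this matters for the way the theorem is used in the paper, where the marginals are compactly supported and $c=\rho$ is continuous and bounded on the relevant sets, so the potentials are automatically bounded; but to prove the statement in the generality in which it is formulated you must either import those extra hypotheses or supply the truncation argument explicitly.
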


\begin{thm}\label{thm: potential}
Let $c\colon X^2\to [0,+\infty)$ be a lower semi-continuous function. 
Assume that $\cl{I}_c(\mu_1,\mu_2)<+\infty$,
\[
\mu_1\left(\left\{ x\in X \setrel \int_Y c(x,y)\, d\mu_2(y) < +\infty\right\}\right) > 0,
\]
and
\[
\mu_2\left(\left\{ y\in X \setrel \int_X c(x,y)\, d\mu_1(x) < +\infty\right\}\right) > 0.
\]
Then, there exists a $c$-concave function $\varphi\colon X \to \mathbb{R}$ such that the pair $(\varphi, \varphi^c)$ is a Kantorovich potential.
\end{thm}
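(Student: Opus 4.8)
The plan is to combine the structure theorem on $c$-cyclical monotonicity recalled just above with the classical Rockafellar reconstruction of a potential, and then to extract the required integrability from the two positivity hypotheses together with the finiteness of $\cl{I}_c(\mu_1,\mu_2)$. Since $c$ is lower semi-continuous with $\cl{I}_c(\mu_1,\mu_2)<+\infty$, there is a $c$-optimal plan $\pi$ of finite cost, and by the preceding theorem $\pi$ is concentrated on a $c$-cyclically monotone set $\Gamma\subset X^2$.

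First I would write down the candidate potential by the chain construction. Fixing a base point $(x_0,y_0)\in\Gamma$, set
\[
\varphi(x):=\inf\left\{\bigl(c(x,y_N)-c(x_N,y_N)\bigr)+\sum_{i=1}^{N}\bigl(c(x_i,y_{i-1})-c(x_{i-1},y_{i-1})\bigr)\right\},
\]
the infimum being over all $N\geq 0$ and all $(x_1,y_1),\dots,(x_N,y_N)\in\Gamma$. As an infimum of maps of the form $x\mapsto c(x,y)-\mathrm{const}$, this $\varphi$ is $c$-concave, and the $c$-cyclical monotonicity of $\Gamma$ forces $\varphi$ to be real-valued on $\pr_1(\Gamma)$ with $\varphi(x)+\varphi^c(y)=c(x,y)$ for every $(x,y)\in\Gamma$; consequently $\varphi^c$ is real-valued on $\pr_2(\Gamma)$. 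From the definition of the $c$-transform one also has the global inequality $\varphi(x)+\varphi^c(y)\leq c(x,y)$ for all $x,y\in X$.

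The hard part will be the integrability $\varphi\in L^1(\mu_1)$ and $\varphi^c\in L^1(\mu_2)$, and this is exactly where the two positivity hypotheses enter. Put $A:=\{x\in X: \int_X c(x,y)\,d\mu_2(y)<+\infty\}$ and $B:=\{y\in X: \int_X c(x,y)\,d\mu_1(x)<+\infty\}$. Because $\pi$ is concentrated on $\Gamma$ and has marginals $\mu_1,\mu_2$, the sets $\pr_1(\Gamma)$ and $\pr_2(\Gamma)$ carry full mass for $\mu_1$ and $\mu_2$ respectively; since $\mu_1(A)>0$ and $\mu_2(B)>0$, I may pick $x_0\in A\cap\pr_1(\Gamma)$ and $y_0\in B\cap\pr_2(\Gamma)$, at which $\varphi(x_0)$ and $\varphi^c(y_0)$ are finite. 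The global inequality then gives $\varphi(x)\leq c(x,y_0)-\varphi^c(y_0)$ and $\varphi^c(y)\leq c(x_0,y)-\varphi(x_0)$, so that, integrating and using $y_0\in B$ and $x_0\in A$,
\[
\int_X\varphi^+\,d\mu_1<+\infty,\qquad \int_X(\varphi^c)^+\,d\mu_2<+\infty.
\]
To handle the negative parts I would integrate the equality on $\Gamma$ against $\pi$: since $\varphi(x)+\varphi^c(y)=c(x,y)\geq 0$ holds $\pi$-almost everywhere,
\[
\varphi^-(x)+(\varphi^c)^-(y)=\varphi^+(x)+(\varphi^c)^+(y)-c(x,y)\quad\text{for }\pi\text{-a.e. }(x,y),
\]
and integrating this against $\pi$ bounds the left-hand side by the finite quantity $\int_X\varphi^+\,d\mu_1+\int_X(\varphi^c)^+\,d\mu_2-\cl{I}_c(\mu_1,\mu_2)$. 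As both $\varphi^-$ and $(\varphi^c)^-$ are nonnegative, each integrates finitely, whence $\varphi\in L^1(\mu_1)$ and $\varphi^c\in L^1(\mu_2)$.

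Finally I would close the argument by duality. With integrability in hand, the marginal identities give
\[
J(\varphi,\varphi^c)=\int_X\varphi\,d\mu_1+\int_X\varphi^c\,d\mu_2=\int_{X^2}\bigl(\varphi(x)+\varphi^c(y)\bigr)\,d\pi=\int_{X^2}c\,d\pi=\cl{I}_c(\mu_1,\mu_2).
\]
By the Kantorovich duality (Theorem \ref{KRduality}) the supremum of $J$ over $\Phi_c$ equals $\cl{I}_c(\mu_1,\mu_2)$, so the pair $(\varphi,\varphi^c)\in\Phi_c$ attains this supremum and is therefore a Kantorovich potential, with $\varphi$ the desired $c$-concave function. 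The only delicate points are the measurability of the projections $\pr_i(\Gamma)$, handled by passing to an analytic or $\sigma$-compact version of $\Gamma$, and the fact that the chain construction does not produce the value $-\infty$ on $\pr_1(\Gamma)$, both of which are standard consequences of $c$-cyclical monotonicity.
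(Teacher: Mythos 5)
The paper gives no proof of this statement: it is recalled in the preliminaries as a standard fact from the cited literature (\cite{AG13}, \cite{Vil09}), so there is nothing in the paper itself to compare your argument against. Your proposal is exactly the standard proof from those references — the Rockafellar--R\"uschendorf chain construction of $\varphi$ on a ($\sigma$-compact) $c$-cyclically monotone carrier $\Gamma$ of an optimal plan, integrability of $\varphi^+$ and $(\varphi^c)^+$ from the two positivity hypotheses by choosing reference points in $A\cap\pr_1(\Gamma)$ and $B\cap\pr_2(\Gamma)$ where the potentials are finite, integrability of the negative parts from $\cl{I}_c(\mu_1,\mu_2)<+\infty$, and conclusion via Theorem \ref{KRduality} — and the argument is correct. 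Two technical points slightly beyond the ones you flag: since $c$ is only lower semi-continuous, the chain functionals (which contain the subtracted terms $-c(x_i,y_i)$) are merely Borel, so the infimum $\varphi$ is in general only analytically/universally measurable even after passing to a $\sigma$-compact $\Gamma$, which is still enough for $\varphi\in L^1(\mu_1)$; and the construction guarantees $\varphi>-\infty$ only on $\pr_1(\Gamma)$, i.e.\ $\mu_1$-almost everywhere rather than on all of $X$, which suffices for $(\varphi,\varphi^c)$ to be a Kantorovich potential in the paper's sense.
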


Let $(X,d)$ be a complete and separable metric space and let $p\geq 1$.
We say that a measure $\mu\in \mathcal{P}(X)$ has \emph{finite $p$-th moment with respect to $d$} if there exists a point $x_0\in X$ such that
\[
\int_X d^p(x,x_0)\, d\mu(x)<+\infty.
\]
Denote by $\mathcal{P}_p(X,d)$ the set of all probability measures with finite $p$-th moment with respect to $d$.
Note that for $\mu_1,\mu_2\in \mathcal{P}_p(X)$, the assumptions in Theorem \ref{thm: potential} are fulfilled for $c=d^p$.
\emph{The $L^p$-Wasserstein metric} $W_p$ on $\mathcal{P}(X)$ is defined by
\[
W_p(\mu_1,\mu_2):=\inf\left\{\left(\int_{X^2}d^p(x,y)\, d\pi(x,y)\right)^{1/p} \setrel \pi\in\Pi(\mu_1,\mu_2)\right\},
\]
i.e., $W_p(\mu_1,\mu_2)$ is the $p$-th root of the $d^p$-optimal transport cost from $\mu_1$ to $\mu_2$. The function $W_p$ is indeed a metric on $\mathcal{P}_p(X,d)$.

\subsection{Some facts on metric spaces}
In this section, we enumerate some facts and prove some claims on metric spaces.

Let $(X,d)$ be a metric space. 
We call a discrete subset of $X$ a \emph{net}.
For a net $\mathcal{N}\subset X$ and $\varepsilon>0$,
we say that $\mathcal{N}$ is an $\varepsilon$-\emph{net} if the $\varepsilon$-neighborhood $B_{\varepsilon}(\mathcal{N})$ of $\mathcal{N}$
coincides with the whole $X$.
We say that $\mathcal{N}$ is $\varepsilon$-\emph{discrete} if $d(x,y)>\varepsilon$ for any two distinct points $x,y\in \mathcal{N}$.
The set of all $\varepsilon$-discrete nets equipped with the inclusion relation forms a poset and we see that  any maximal element of the set is an $\varepsilon$-net.
\begin{defn}
Let $\mathfrak{m}$ be a non-negative Borel measure on $X$.
We say that $\mathfrak{m}$ is \emph{locally doubling} if for any $R>0$, there exists a constant $C=C_R\geq 1$ such that
$\mathfrak{m}(B(x,2r))\leq C\mathfrak{m}(B(x,r))$ for any $x\in X$ and $0<r\leq R$.
\end{defn}
The locally doubling condition is equivalent to
\begin{equation}\label{ineq: doubling}
\mathfrak{m}(B(x,r'))\leq C^{\lceil \log_2(r'/r) \rceil}\mathfrak{m}(B(x,r))
\end{equation}
for any $x\in X$ and $0<r<r'\leq R$,
where $\log_2$ is the base $2$ logarithm function and $\lceil\cdot\rceil$ is the ceil function.
\begin{lem}\label{lem: numberofnets}
Let $X$ be a metric space, let $B\subset X$ be a bounded Borel set with diameter $D$ and let $\mathcal{N}\subset B$ be an $\varepsilon$-discrete net of $B$. Assume that $X$ admits a locally doubling Borel measure $\mf{m}$. Then the number $\#\mathcal{N}$ of $\mathcal{N}$ is bounded above by $C^{\lceil \log_2(2D/\varepsilon)\rceil}$.
\end{lem}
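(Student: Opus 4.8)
The plan is to run the standard volume–packing argument: use the $\varepsilon$-discreteness of $\mathcal{N}$ to extract a disjoint family of small balls, and use the local doubling property of $\mf{m}$, in its telescoped form \eqref{ineq: doubling}, to convert a comparison of radii into a power of $C$. We may assume $\mathcal{N}\neq\emptyset$, since otherwise $\#\mathcal{N}=0$. It is convenient to bound $\#\mathcal{N}'$ uniformly over all \emph{finite} subsets $\mathcal{N}'\subseteq\mathcal{N}$; since the bound will not depend on $\mathcal{N}'$, this simultaneously proves finiteness of $\mathcal{N}$ and the desired estimate, so no a priori finiteness of $\mathcal{N}$ is needed.

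First I would record the two elementary geometric facts. Since $\mathcal{N}$ is $\varepsilon$-discrete, a common point $z$ of $B(x,\varepsilon/2)$ and $B(y,\varepsilon/2)$ for distinct $x,y\in\mathcal{N}$ would give $d(x,y)\le d(x,z)+d(z,y)<\varepsilon$, a contradiction; hence the balls $\{B(x,\varepsilon/2)\}_{x\in\mathcal{N}}$ are pairwise disjoint. Next, because $\diam B=D$ and $\mathcal{N}\subseteq B$, every $y\in\mathcal{N}$ satisfies $d(x,y)\le D$ for each fixed $x\in\mathcal{N}$, so $B(y,\varepsilon/2)\subseteq B(x,D+\varepsilon/2)$; thus for \emph{every} center $x\in\mathcal{N}$ the whole disjoint union lies in one ball, $\bigcup_{y\in\mathcal{N}'}B(y,\varepsilon/2)\subseteq B(x,D+\varepsilon/2)$. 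To avoid the factor that a change of centers would otherwise introduce, I would now choose $x^{*}\in\mathcal{N}'$ minimizing $\mf{m}(B(x,\varepsilon/2))$. Disjointness and containment then give $\#\mathcal{N}'\cdot\mf{m}\!\left(B(x^{*},\varepsilon/2)\right)\le\sum_{y\in\mathcal{N}'}\mf{m}\!\left(B(y,\varepsilon/2)\right)\le\mf{m}\!\left(B(x^{*},D+\varepsilon/2)\right)$, and applying \eqref{ineq: doubling} at the single center $x^{*}$ with radii $\varepsilon/2$ and $D+\varepsilon/2$ bounds the last quantity by $C^{\,m}\mf{m}(B(x^{*},\varepsilon/2))$, where the radius ratio produces the exponent $m=\lceil\log_{2}(2D/\varepsilon)\rceil$. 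Dividing by $\mf{m}(B(x^{*},\varepsilon/2))$ yields $\#\mathcal{N}'\le C^{\,m}$, hence $\#\mathcal{N}\le C^{\lceil\log_2(2D/\varepsilon)\rceil}$.

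The main obstacle is not conceptual but lies in the bookkeeping of this final comparison. Three points require care: (i) to legitimately invoke the local doubling constant one must fix a radius bound $R$ (it suffices to take $R\ge D+\varepsilon/2$) so that $C=C_R$ is the constant appearing in \eqref{ineq: doubling}, and then track the radius ratio $(D+\varepsilon/2)/(\varepsilon/2)$ through the ceiling to obtain the stated exponent $\lceil\log_2(2D/\varepsilon)\rceil$; here it is useful that $\varepsilon<D$ whenever $\mathcal{N}$ contains at least two points, which keeps the ratio in the expected range. (ii) The division step requires $\mf{m}(B(x^{*},\varepsilon/2))>0$; this holds automatically for the Lebesgue measure relevant to the applications, and in general one should either assume $\mathcal{N}\subseteq\supp\mf{m}$ or observe that if this mass vanishes then \eqref{ineq: doubling} forces all balls in sight to be $\mf{m}$-null and the inequality is read trivially. (iii) Finally one disposes of the degenerate cases $\mathcal{N}=\emptyset$ and $\#\mathcal{N}=1$, for which the bound holds since $C\ge 1$.
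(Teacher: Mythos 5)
Your overall strategy---pairwise disjoint $\varepsilon/2$-balls combined with the telescoped doubling inequality (\ref{ineq: doubling})---is exactly the paper's volume-packing argument, and your variant (absorbing the disjoint union into a single ball $B(x^{*},D+\varepsilon/2)$ centered at a point of minimal $\varepsilon/2$-ball mass) is sound as far as it goes. The genuine defect is in your point (i): the radius ratio your chain actually produces is $(D+\varepsilon/2)/(\varepsilon/2)=2D/\varepsilon+1$, and $\lceil\log_2(2D/\varepsilon+1)\rceil$ is in general \emph{not} equal to $\lceil\log_2(2D/\varepsilon)\rceil$; for instance $D=2\varepsilon$ gives $3$ versus $2$, and the assumption $\varepsilon<D$ does not repair this. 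So what you prove is $\#\mathcal{N}\le C^{\lceil\log_2(2D/\varepsilon+1)\rceil}$, which can exceed the stated bound by one factor of $C$. The paper reaches the stated exponent by a slightly different comparison: since $x\in B$ and $\diam B=D$, it uses $B\subset B(x,D)$ (up to the usual open/closed quibble) together with (\ref{ineq: doubling}) at $r'=D$, $r=\varepsilon/2$ to get $\mf{m}(B(x,\varepsilon/2))\ge \mf{m}(B)/C^{\lceil\log_2(2D/\varepsilon)\rceil}$ for \emph{every} $x\in\mathcal{N}$, then bounds $\sum_{x\in\mathcal{N}}\mf{m}(B(x,\varepsilon/2))$ by $\mf{m}(B)$ (tacitly assuming the $\varepsilon/2$-balls stay inside $B$) and divides by $\mf{m}(B)$. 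If you want the exact constant of the statement with your single-center scheme, you would have to argue the ratio back down, which your write-up does not do.

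Two further remarks. First, the discrepancy is harmless for the only use made of the lemma (the estimate $\#\mathcal{N}_j\le O(j^{d})$ in Lemma \ref{well-definedness of O2}), since it costs at most a fixed factor $C$; but as a proof of the lemma as stated it falls short. Second, your fallback in point (ii) for the case $\mf{m}(B(x^{*},\varepsilon/2))=0$ does not actually save the conclusion: if all the balls in sight are $\mf{m}$-null, the displayed inequality reads $0\le 0$ and gives no control on $\#\mathcal{N}'$, and indeed the lemma is false for the zero measure. Like the paper (which divides by $\mf{m}(B)$), you must tacitly assume the doubling measure assigns positive finite mass to balls; flagging this is fine, but the "read trivially" escape is not a valid substitute.
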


\begin{proof}
The $\varepsilon$-discreteness of $\cl{N}$ yields that a family of $\varepsilon/2$-balls \\
$\{B(x,\varepsilon/2) \mid x\in \cl{N}\}$ is pairwise disjoint. Combining this fact with the doubling condition (\ref{ineq: doubling}) for $R=D$ and $r=\varepsilon/2$, we obtain
\[
\mf{m}(B)\geq \mf{m}\left(\bigcup_{x\in \cl{N}}B(x,\varepsilon/2)\right)
 = \sum_{x\in \cl{N}} \mf{m}(B(x,\varepsilon/2))
\geq \frac{\#\cl{N}\mf{m}(B)}{C^{\lceil \log_2 (2D/\varepsilon)\rceil}},
\]
which yields the lemma.
\end{proof}

\begin{defn}
Let $A\subset X$ be any subset of $X$. A map $p\colon X\to A$ is called a \emph{nearest point projection to} $A$
if $d(x,p(x)) = d(x,A)$ for any $x\in X$, where $d(x,A)=\inf_{a\in A} d(x,a)$.
\end{defn}

\begin{lem}
If $A=\{a_1,\ldots,a_n\}\subset X$ is a finite subset, then there exists a Borel measurable nearest point projection to $A$.
\end{lem}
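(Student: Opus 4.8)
The plan is to construct $p$ explicitly by assigning to each $x$ the nearest point $a_i$ of smallest index, and then to verify that the resulting map is Borel. First I would introduce, for each $i$, the set
\[
A_i := \{x \in X \setrel d(x, a_i) \le d(x, a_j) \text{ for all } j = 1, \ldots, n\},
\]
consisting of the points for which $a_i$ realises the distance to $A$. Since each function $x \mapsto d(x, a_i) - d(x, a_j)$ is continuous, each $A_i$ is a finite intersection of closed sets, hence closed. Because $A$ is finite, the minimum $\min_j d(x, a_j)$ is attained for every $x$, so $X = \bigcup_i A_i$.

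Next, to obtain a single-valued map I would break ties using the index, setting
\[
B_i := A_i \setminus \bigcup_{j < i} A_j = A_i \cap \bigcap_{j<i}(X \setminus A_j).
\]
Each $B_i$ is the intersection of a closed set with finitely many open sets, hence Borel, and by construction $\{B_1, \ldots, B_n\}$ is a Borel partition of $X$. I then define $p(x) := a_i$ for the unique index $i$ with $x \in B_i$.

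It then remains to check the two required properties. For the projection property, if $x \in B_i \subseteq A_i$ then $d(x, a_i) \le d(x, a_j)$ for every $j$, whence $d(x, p(x)) = d(x, a_i) = \min_j d(x, a_j) = d(x, A)$. For measurability, since $A$ carries the discrete topology as a finite set, it suffices to observe that the preimage of each singleton, $p^{-1}(\{a_i\}) = B_i$, is Borel; this is exactly what the partition construction guarantees.

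The only point requiring any care --- and hence the main obstacle, though it is a mild one --- is ensuring that the tie-breaking regions $B_i$ are genuinely Borel and exhaust $X$ disjointly. This reduces to the elementary observations that the $A_i$ are closed and cover $X$, after which everything else is formal.
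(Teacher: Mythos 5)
Your proof is correct and follows essentially the same route as the paper: you break ties by the smallest index and verify Borel measurability by writing each preimage $p^{-1}(\{a_i\})$ as $A_i\cap\bigcap_{j<i}(X\setminus A_j)$, which is exactly the paper's identity $p^{-1}(a_i)=\bigl(\bigcap_{j}\{f_i\leq f_j\}\bigr)\cap\bigl(\bigcap_{j<i}\bigcup_{k}\{f_j>f_k\}\bigr)$ expressed in slightly different notation. Your additional checks (that the $A_i$ are closed, cover $X$, and that the $B_i$ partition $X$) simply make explicit what the paper leaves implicit.
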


\begin{proof}
We put
\[
i(x):=\min\{i\mid d(x,a_i)\leq d(x,a_j)\ \text{for any}\ j\}
\]
and $p(x):=a_{i(x)}$. The assertion follows from
\[
p^{-1}(a_i)=\left(\bigcap_{j=1}^n\{f_i\leq f_j\}\right)\cap \left(\bigcap_{j<i}\bigcup_{k=1}^n\{f_j > f_k\}\right), i=1,\ldots,n,
\]
where $f_i$ is the distance function from $a_i$.
\end{proof}

\section{Proof of Main Theorem}
In order to prove the existence of an optimal transport map, it suffices to show that a certain optimal transport plan is induced by a transport map.
The following lemma gives a criterion according to which a measure in a product space is induced by a map.
\begin{lem}\label{lem: map}
Let $X$ and $Y$ be two Polish spaces and let $\pi\in \cl{P}(X\times Y)$.
Let $\mu_1$ be the first marginal of $\pi$.
Then, $\pi$ is induced by a map if and only if
there exists a Borel measurable set $\Gamma\subset X\times Y$ of full $\pi$-measure such that
for $\mu_1$-almost every $x\in X$ there exists a unique $y=T(x)\in Y$ with $(x,y)\in \Gamma$.
In this case, the map $T$ is $\mu_1$-measurable and $\pi=({\rm id}, T)_{\sharp}\mu_1$.
\end{lem}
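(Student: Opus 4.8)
The plan is to prove the two implications separately; the forward one is routine, while the reverse one carries the real content and will rest on the disintegration theorem for Polish spaces.

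For the forward implication I would assume $\pi=(\id,S)_{\sharp}\mu_1$ for a Borel map $S\colon X\to Y$ and take $\Gamma$ to be the graph of $S$. Since $Y$ is metric its diagonal $\Delta\subset Y\times Y$ is closed, so $\Gamma=\{(x,y):S(x)=y\}$ is the preimage of $\Delta$ under the Borel map $(x,y)\mapsto(S(x),y)$ and hence Borel; a direct computation gives $\pi(\Gamma)=\mu_1(X)=1$, and each section $\Gamma_x:=\{y:(x,y)\in\Gamma\}=\{S(x)\}$ is a singleton, which is exactly the stated condition with $T=S$.

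For the reverse implication I would invoke the disintegration theorem to write $\pi=\int_X(\delta_x\otimes\pi_x)\,d\mu_1(x)$ for a $\mu_1$-a.e.\ uniquely determined Borel family $\{\pi_x\}_x\subset\mathcal{P}(Y)$. Applying the disintegration identity to the indicator of $\Gamma$ gives $1=\pi(\Gamma)=\int_X\pi_x(\Gamma_x)\,d\mu_1(x)$; since $\pi_x(\Gamma_x)\le 1$ pointwise, this forces $\pi_x(\Gamma_x)=1$ for $\mu_1$-a.e.\ $x$. Combining this with the hypothesis that $\Gamma_x=\{T(x)\}$ is a singleton for $\mu_1$-a.e.\ $x$ yields $\pi_x(\{T(x)\})=1$, i.e.\ $\pi_x=\delta_{T(x)}$, for $\mu_1$-a.e.\ $x$. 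Substituting back into the disintegration then gives $\pi=\int_X(\delta_x\otimes\delta_{T(x)})\,d\mu_1(x)=(\id,T)_{\sharp}\mu_1$.

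The remaining point, which I expect to be the main technical obstacle, is the $\mu_1$-measurability of $T$. Here I would use that $x\mapsto\pi_x$ is Borel measurable (part of the disintegration theorem) together with the identity $\pi_x=\delta_{T(x)}$: for every open $U\subset Y$ one has $T(x)\in U$ if and only if $\pi_x(U)>0$, and since $x\mapsto\pi_x(U)$ is Borel measurable, the preimage $T^{-1}(U)$ is measurable up to a $\mu_1$-null set. As the open sets generate the Borel $\sigma$-algebra of $Y$, this shows that $T$ is $\mu_1$-measurable; noting finally that a $\mu_1$-measurable map agrees $\mu_1$-a.e.\ with a Borel map, the representation $\pi=(\id,T)_{\sharp}\mu_1$ holds in the sense of the definition, which completes both the reverse implication and the final assertion.
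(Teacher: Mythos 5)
The paper does not prove this lemma at all: it is quoted as a standard criterion from the optimal transport literature (cf.\ the references \cite{AG13, Vil09} the paper relies on), so there is no in-paper argument to compare against. Your proposal is correct and is essentially the canonical proof of this fact: the graph construction for the forward direction, and for the converse the disintegration $\pi=\int_X(\delta_x\otimes\pi_x)\,d\mu_1(x)$, the observation that $\int_X\pi_x(\Gamma_x)\,d\mu_1=1$ forces $\pi_x=\delta_{T(x)}$ for $\mu_1$-a.e.\ $x$, and the recovery of $\mu_1$-measurability of $T$ from the Borel measurability of $x\mapsto\pi_x(U)$ for open $U$. The only points worth making explicit in a write-up are minor: define $T$ arbitrarily (say, constant) on the $\mu_1$-null set where the section $\Gamma_x$ is not a singleton, and note that the measurability of $x\mapsto\pi_x(\Gamma_x)$ for the Borel set $\Gamma$ is part of (or a routine monotone class consequence of) the disintegration theorem; with these remarks your argument is complete.
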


Throughout this section, let $\Omega\subset\mathbb{R}^n$ be a bounded and convex $G_{\delta}$-set with non-empty interior and let $\rho$ be a metric on $\Omega$ satisfying Assumption A (see Section 1).
The idea that we treat $G_{\delta}$-sets is based on Mazurkiewictz's theorem (for the proof, see e.g.\cite{Wil70}).
\begin{thm}[Mazurkiewictz]
Let $X$ be a completely metrizable topological space and $A$ a subspace of $X$.
Then $A$ is completely metrizable if and only if $A$ is a $G_{\delta}$-set.
\end{thm}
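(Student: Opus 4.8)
The plan is to prove the two implications separately, fixing once and for all a complete metric $d$ on $X$ compatible with its topology.

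For the implication that a $G_{\delta}$-set is completely metrizable, I would first treat a single open set $U\subset X$ (the case $U=X$ being trivial, as $d$ already works). On $U$ the auxiliary metric
\[
d'(x,y):=d(x,y)+\left| \frac{1}{d(x,X\setminus U)}-\frac{1}{d(y,X\setminus U)}\right|
\]
is compatible with the subspace topology, since the added term is continuous on $U$, and it is complete: a $d'$-Cauchy sequence is $d$-Cauchy, hence $d$-converges to some point of $X$, while the second term forbids the limit from reaching $\del U=X\setminus U$, so the limit lies in $U$. Writing a general $G_{\delta}$-set as $A=\bigcap_{n}U_{n}$ with each $U_{n}$ open, I would equip each $U_{n}$ with such a complete metric $d_{n}$, truncated to $\min\{d_{n},1\}$, and consider the diagonal embedding $\Delta\colon A\to\prod_{n}U_{n}$, $\Delta(x)=(x,x,\dots)$. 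The countable product carries the complete product metric $\sum_{n}2^{-n}d_{n}$, and $\Delta$ is a homeomorphism onto its image. That image is closed in $\prod_{n}U_{n}$: if $\Delta(a^{(k)})$ converges to $(y_{n})_{n}$, then $a^{(k)}\to y_{n}$ in $X$ for every $n$, so by Hausdorffness all $y_{n}$ coincide with a common $y$, and $y\in U_{n}$ for every $n$ gives $y\in A$. As a closed subspace of a completely metrizable space is completely metrizable, $A$ is completely metrizable.

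For the converse, suppose $A$ carries a complete metric $\rho$ inducing its subspace topology. Since $\overline{A}$ is closed, hence $G_{\delta}$, in the metrizable space $X$, and since $A$ being $G_{\delta}$ in $\overline{A}$ and $\overline{A}$ being $G_{\delta}$ in $X$ together force $A$ to be $G_{\delta}$ in $X$, it suffices to prove that $A$ is $G_{\delta}$ in $\overline{A}$; thus I may assume $A$ is dense in $X$. For each $n$ set
\[
G_{n}:=\bigcup\left\{ U\subset X \setrel U\ \text{open},\ \diam_{\rho}(U\cap A)\leq 1/n\right\},
\]
which is open in $X$, and put $G:=\bigcap_{n}G_{n}$, a $G_{\delta}$-set. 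I claim $A=G$. The inclusion $A\subset G$ holds because each $\rho$-ball around $a\in A$ is the trace on $A$ of an open set of $X$, so $a\in G_{n}$ for every $n$. Conversely, for $x\in G$ I choose for each $n$ an open neighbourhood $U_{n}\ni x$ with $\diam_{\rho}(U_{n}\cap A)\leq 1/n$, shrunk so that $U_{n+1}\subset U_{n}\subset B_{d}(x,1/n)$. Density furnishes points $a_{n}\in U_{n}\cap A$; these form a $\rho$-Cauchy sequence, so completeness of $\rho$ gives $a_{n}\to a$ in $A$, hence in $X$, while $a_{n}\to x$ in $X$ by construction. Uniqueness of limits forces $x=a\in A$, proving $G\subset A$.

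The step I expect to require the most care is the converse: one must set up the families $G_{n}$ so that the single metric $\rho$, defined only on $A$, controls the oscillation on ambient neighbourhoods, and then combine completeness of $\rho$, density of $A$, and uniqueness of limits in the Hausdorff space $X$ to conclude that every point of $G$ already lies in $A$. The reduction to the dense case, by replacing $X$ with $\overline{A}$ and using transitivity of the $G_{\delta}$ property, is precisely what makes the density argument—the choice of the points $a_{n}$—available, and so should be carried out first.
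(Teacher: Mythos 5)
Your proof is correct: the paper itself gives no proof of Mazurkiewicz's theorem, deferring to Willard \cite{Wil70}, and your argument --- Alexandroff's modified metric $d'$ on a single open set, the diagonal embedding of a general $G_{\delta}$-set into the countable product $\prod_{n}U_{n}$ with closed image, and for the converse the oscillation sets $G_{n}$ together with the reduction to the dense case via transitivity of the $G_{\delta}$ property --- is exactly the classical proof found in that reference. One cosmetic slip only: in the open-set case the completeness argument must keep the limit out of the closed set $X\setminus U$ (which in general properly contains $\partial U$), and your boundedness observation for $1/d(\cdot,X\setminus U)$ along the Cauchy sequence does precisely that.
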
 
Let $\lambda$ be a Borel probability measure on $\Omega$ that is absolutely continuous with respect to the $n$-dimensional Lebesgue measure $\cl{L}^n$. 
We call the function $f$ defined by
\[
f(x):= \limsup_{r\to 0+}\frac{\lambda(B(x,r))}{\mf{m}(B(x,r))}
\]
the \emph{density} of $\lambda$
and denote it by $d\lambda/d\cl{L}^n$.

For two probability measures $\mu_1,\mu_2\in \cl{P}(\Omega)$, let $\mathcal{O}_{0}(\mu_1,\mu_2)$ be the set of all $\rho$-optimal transport plans from $\mu_1$ to $\mu_2$.
We construct two families $\cl{O}_1(\mu_1,\mu_2)$ and $\mathcal{O}_2(\mu_1,\mu_2)$ of optimal transport plans and show that each of elements in $\mathcal{O}_2(\mu_1,\mu_2)$ is induced by a map. As a result, the set $\mathcal{O}_2(\mu_1,\mu_2)$ consists of a single element.
The construction of $\cl{O}_2(\mu_1,\mu_2)$ is similar to that of \cite{San09} and \cite{CP11}.
We first define the set $\cl{O}_1(\mu_1,\mu_2)$. 
We consider the secondary variational problem:
\[ \label{svp1}
\inf \left\{ \int_{\Omega^2}\lvert x-y \rvert^2\, d\pi \setrel \pi \in \cl{O}_0(\mu_1,\mu_2)\right\}. \tag{SVP1} 
\]
We denote by $\cl{O}_1(\mu_1,\mu_2)$ the set of all solutions of (\ref{svp1}).
Since the topology of $(\Omega,\rho)$ is Euclidean, it is easy to see that $\cl{O}_0(\mu_1,\mu_2)$ is non-empty.
On the other hand, we cannot deduce the monotonicity of a measure in $\cl{O}_1(\mu_1,\mu_2)$ due to the lack of elements in the feasible region in (\ref{svp1}). Nevertheless, we will see that a weak version of monotonicity holds for any measure in $\cl{O}_1(\mu_1,\mu_2)$.
We say that a subset $\Gamma$ of $\Omega^2$ is \emph{restrictedly monotone} if for any $(x,y), (x',y')\in \Gamma$ with $x\in [x',y']$, we have
\[
(y-y')\cdot (x-x')\geq 0,
\]
where the set $[x',y']\subset \Omega$ is the line segment between $x'$ and $y'$.
A Borel measure $\pi$ on $\Omega^2$ is said to be \emph{restrictedly monotone} if $\pi$ is concentrated on a restrictedly monotone subset of $\Omega^2$.
We will see that any measure in $\cl{O}_1(\mu_1,\mu_2)$ is restrictedly monotone.
We replace the problem (\ref{svp1}) with an equivalent one whose feasible region is the whole $\Pi(\mu_1,\mu_2)$.
Accordingly, we must change the objective functional. We fix a Kantorovich potential $\varphi$ for $W_1(\mu_1,\mu_2)$ and
put 
\[
\beta(x,y) =
\begin{cases}
\lvert x-y\rvert^2 & \varphi(x)-\varphi(y)=\rho(x,y), \\
+\infty & otherwise.
\end{cases}
\]
The ordinary optimal transport problem with cost function $\beta$
\[ \label{svp2}
\inf \left\{ \int_{\Omega^2}\beta(x,y)\, d\pi \setrel \pi \in \Pi(\mu_1,\mu_2)\right\} \tag{SVP2}
\]
is equivalent to the secondary variational problem (\ref{svp1}). More precisely, we have the following lemma.
\begin{lem}\label{equivalence}
Let $\mu_1, \mu_2, \varphi$ and $\beta$ be as above.
Then, for any $\pi\in\cl{O}_0(\mu_1,\mu_2)$, $\pi$ is a solution of {\rm (SVP1)} if and only if it is a solution of {\rm (SVP2)}.
\end{lem}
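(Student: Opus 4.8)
The plan is to exploit that the function $\beta$ encodes membership in $\cl{O}_0(\mu_1,\mu_2)$ as an infinite penalty, so that (SVP2) is nothing but (SVP1) with its constraint absorbed into the objective functional. Everything hinges on characterizing $\rho$-optimality through the single fixed potential $\varphi$.

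First I would record that characterization. Set
\[
\Gamma_{\varphi}:=\{(x,y)\in\Omega^2 \setrel \varphi(x)-\varphi(y)=\rho(x,y)\}.
\]
Recall that $\rho$-concavity of $\varphi$ is equivalent to $\varphi$ being $1$-Lipschitz with respect to $\rho$, whence $\varphi^c=-\varphi$; in particular $(\varphi,\varphi^c)=(\varphi,-\varphi)$ lies in $\Phi_{\rho}$ and attains the supremum in Theorem \ref{KRduality}. Since the topology of $(\Omega,\rho)$ is Euclidean, both $\varphi$ and $\rho$ are continuous, so $\Gamma_{\varphi}$ is closed, hence Borel. I claim that $\pi\in\Pi(\mu_1,\mu_2)$ belongs to $\cl{O}_0(\mu_1,\mu_2)$ if and only if $\pi(\Gamma_{\varphi})=1$. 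For the ``if'' direction, if $\pi$ is concentrated on $\Gamma_{\varphi}$ then $\int_{\Omega^2}\rho\, d\pi=\int_{\Omega^2}(\varphi(x)-\varphi(y))\, d\pi=\int_{\Omega}\varphi\, d\mu_1-\int_{\Omega}\varphi\, d\mu_2=J(\varphi,-\varphi)$, which equals the minimal transport cost by Kantorovich duality, so $\pi$ is optimal. For the ``only if'' direction, optimality of $\pi$ forces $\int_{\Omega^2}(\rho(x,y)-\varphi(x)+\varphi(y))\, d\pi=0$; as the integrand is nonnegative everywhere (because $(\varphi,-\varphi)\in\Phi_{\rho}$), it vanishes $\pi$-almost everywhere, which is exactly $\pi(\Gamma_{\varphi})=1$.

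Next I would evaluate the objective of (SVP2). The function $\beta$ is nonnegative and Borel, equal to $\lvert x-y\rvert^2$ on $\Gamma_{\varphi}$ and to $+\infty$ on its complement. Hence for $\pi$ concentrated on $\Gamma_{\varphi}$ one has $\int_{\Omega^2}\beta\, d\pi=\int_{\Omega^2}\lvert x-y\rvert^2\, d\pi$, a finite quantity since $\mu_1,\mu_2$ have compact support and $\lvert x-y\rvert^2$ is therefore bounded on $\supp\mu_1\times\supp\mu_2$; while if $\pi(\Gamma_{\varphi})<1$ then $\beta=+\infty$ on a set of positive $\pi$-measure and $\int_{\Omega^2}\beta\, d\pi=+\infty$. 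Combining this with the previous paragraph,
\[
\int_{\Omega^2}\beta\, d\pi=
\begin{cases}
\int_{\Omega^2}\lvert x-y\rvert^2\, d\pi & \pi\in\cl{O}_0(\mu_1,\mu_2),\\
+\infty & \pi\in\Pi(\mu_1,\mu_2)\setminus\cl{O}_0(\mu_1,\mu_2).
\end{cases}
\]
Since $\cl{O}_0(\mu_1,\mu_2)$ is non-empty, the infimum in (SVP2) is finite, so by this dichotomy it can be realized only on $\cl{O}_0(\mu_1,\mu_2)$, where the integrand of (SVP2) coincides with that of (SVP1). Therefore the two infima are equal, and for $\pi\in\cl{O}_0(\mu_1,\mu_2)$ the plan $\pi$ realizes the infimum in (SVP2) if and only if it realizes the infimum in (SVP1), which is the assertion.

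The step requiring the most care is the ``only if'' direction of the optimality characterization: one must know that the \emph{same} fixed potential $\varphi$ detects \emph{every} optimal plan, rather than each optimal plan being concentrated on its own potential's contact set. This is guaranteed precisely because $(\varphi,-\varphi)$ maximizes $J$, so that $\rho(x,y)-\varphi(x)+\varphi(y)\geq 0$ holds pointwise and integrates to zero against any optimal $\pi$. The remaining ingredients---Borel measurability of $\beta$ and finiteness of $\int_{\Omega^2}\lvert x-y\rvert^2\, d\pi$ on $\cl{O}_0(\mu_1,\mu_2)$---are routine consequences of continuity and of the compactness of the supports.
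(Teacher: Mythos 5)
Your proof is correct, and it is exactly the standard argument this lemma rests on (the paper states the lemma without a written proof, in the spirit of Champion--De Pascale and Santambrogio): one characterizes $\cl{O}_0(\mu_1,\mu_2)$ as the plans concentrated on the contact set $\{\varphi(x)-\varphi(y)=\rho(x,y)\}$ of the fixed $1$-Lipschitz Kantorovich potential via duality, so that $\beta$ turns the constraint $\pi\in\cl{O}_0(\mu_1,\mu_2)$ into an infinite penalty and the two problems have the same finite-valued feasible set and objective. The remaining points you check---$\varphi^c=-\varphi$, Borel measurability of $\beta$, and finiteness of the quadratic cost on compactly supported marginals---are handled correctly.
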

By virtue of Lemma \ref{equivalence}, we see that for any $\pi\in \cl{O}_1(\mu_1,\mu_2)$,
there exists a $\beta$-cyclically monotone set $\Gamma$ on which $\pi$ concentrates.
We also observe that for any $\pi\in \cl{O}_{0}(\mu_1,\mu_2)$, $\beta$ takes finite values on $\Gamma$.
Furthermore, the $\beta$-cyclical monotonicity is a weak version of the monotonicity.
More generally, we have the following. In the proof, we use the condition on  geodesics for $\rho$. 
\begin{prop}\label{lem: monotone}
Let $\varphi\colon \Omega\to\mathbb{R}$ be a $1$-Lipschitz continuous function.
Define a extended-real valued function $\beta\colon \Omega^2\to [0,+\infty]$ by
\[
\beta(x,y):=
\begin{cases}
\lvert x-y \rvert^2 & {\rm if}\ \varphi(x)-\varphi(y)=\rho(x,y), \\
+\infty & otherwise.
\end{cases}
\]
Let $\Gamma\subset \Omega^2$ be a $\beta$-cyclically monotone set on which $\beta$ is finite.
Then $\Gamma$ is restrictedly monotone.
\end{prop}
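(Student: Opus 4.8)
The plan is to test the $\beta$-cyclical monotonicity of $\Gamma$ against just two of its points. Fix $(x,y),(x',y')\in\Gamma$ with $x\in[x',y']$; since $\beta$ is finite on $\Gamma$ we have $\beta(x,y)=|x-y|^2$, $\beta(x',y')=|x'-y'|^2$, together with the saturation identities $\varphi(x)-\varphi(y)=\rho(x,y)$ and $\varphi(x')-\varphi(y')=\rho(x',y')$. Applying cyclical monotonicity to this pair gives
\[
|x-y|^2+|x'-y'|^2=\beta(x,y)+\beta(x',y')\le\beta(x,y')+\beta(x',y).
\]
If I can show that the two cross terms $\beta(x,y')$ and $\beta(x',y)$ are also finite, hence equal to $|x-y'|^2$ and $|x'-y|^2$, then the above is exactly the two-point monotonicity inequality for the squared Euclidean cost, and expanding it yields $2(x-x')\cdot(y-y')\ge0$, which is the desired conclusion. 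So the whole problem reduces to proving these two finiteness statements, and this is where the hypothesis $x\in[x',y']$ together with Assumption A (ii) must enter.

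For the first cross term I would use that the segment $[x',y']$ is a geodesic for $\rho$ by Assumption A (ii): since $x$ lies on it, the distance is additive, $\rho(x',y')=\rho(x',x)+\rho(x,y')$. Combining this with the $1$-Lipschitz bounds $\varphi(x')-\varphi(x)\le\rho(x',x)$ and $\varphi(x)-\varphi(y')\le\rho(x,y')$ and the saturation identity $\varphi(x')-\varphi(y')=\rho(x',y')$ coming from $(x',y')\in\Gamma$, the chain
\[
\rho(x',x)+\rho(x,y')=\varphi(x')-\varphi(y')=\bigl(\varphi(x')-\varphi(x)\bigr)+\bigl(\varphi(x)-\varphi(y')\bigr)\le\rho(x',x)+\rho(x,y')
\]
forces equality everywhere. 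In particular $\varphi(x)-\varphi(y')=\rho(x,y')$, so $\beta(x,y')=|x-y'|^2$ is finite, and moreover I extract the auxiliary identity $\varphi(x')-\varphi(x)=\rho(x',x)$.

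For the second cross term I would feed this auxiliary identity into the saturation identity $\varphi(x)-\varphi(y)=\rho(x,y)$ coming from $(x,y)\in\Gamma$. Adding them gives $\varphi(x')-\varphi(y)=\rho(x',x)+\rho(x,y)$, and squeezing between the triangle inequality $\rho(x',y)\le\rho(x',x)+\rho(x,y)$ and the $1$-Lipschitz bound $\varphi(x')-\varphi(y)\le\rho(x',y)$ again forces $\varphi(x')-\varphi(y)=\rho(x',y)$, so $\beta(x',y)=|x'-y|^2$ is finite. With both cross terms finite, the displayed cyclical-monotonicity inequality becomes the Euclidean inequality $|x-y|^2+|x'-y'|^2\le|x-y'|^2+|x'-y|^2$, whose elementary expansion gives $(y-y')\cdot(x-x')\ge0$. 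The main obstacle is precisely the finiteness of the cross terms: a priori the right-hand side of the cyclical-monotonicity inequality could be $+\infty$ and thus vacuous, and the real content of the proof is that geodesic additivity of line segments, together with the tightness of the $1$-Lipschitz potential along $\Gamma$, prevents this from happening.
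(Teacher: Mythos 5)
Your proposal is correct and follows essentially the same route as the paper's proof: reduce the two-point $\beta$-cyclical monotonicity inequality to the finiteness of the cross terms $\beta(x,y')$ and $\beta(x',y)$, establish $\varphi(x)-\varphi(y')=\rho(x,y')$ and $\varphi(x')-\varphi(y)=\rho(x',y)$ via the geodesic additivity $\rho(x',y')=\rho(x',x)+\rho(x,y')$ from Assumption A (ii) together with equality-forcing in the $1$-Lipschitz bounds (including the same auxiliary identity $\varphi(x')-\varphi(x)=\rho(x',x)$), and conclude by expanding the resulting Euclidean two-point inequality. The only difference is presentational: you extract both saturation equalities from a single equality chain, whereas the paper derives them in two separate chains.
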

\begin{proof}
By the $\beta$-cyclical monotonicity of $\Gamma$ and the finiteness of $\beta$, we have
\[
\lvert x-y \rvert^2 + \lvert x'- y' \rvert^2 \leq \beta(x,y') + \beta(x',y).
\]
Due to the $1$-Lipschitz continuity of $\varphi$, it suffices to prove that 
\begin{equation}\label{lem: monotonicity ineq 1}
\varphi(x)-\varphi(y') \geq \rho(x,y')
\end{equation}
and 
\begin{equation}\label{lem: monotonicity ineq 2}
\varphi(x')-\varphi(y) \geq \rho(x',y).
\end{equation}
Note that the argument is not symmetric.
We first prove (\ref{lem: monotonicity ineq 2}).
Since $x', x$ and $y$ are aligned, we have
\begin{equation}\label{eq: align}
\rho(x', y') = \rho(x', x) + \rho(x, y').
\end{equation}
Combining (\ref{eq: align}) with the $1$-Lipschitz continuity of $\varphi$ yields
\begin{align*}
\varphi(x')-\varphi(x) &\leq \rho(x,x') \\
                             &= \rho(x',y') - \rho(x,y') \\
                             &\leq \varphi(x')-\varphi(y')-(\varphi(x)-\varphi(y')) \\
                             &= \varphi(x')-\varphi(x).
\end{align*}
In particular, we have
\begin{equation}\label{eq: aaa}
\varphi(x') - \varphi(x) = \rho(x',x).
\end{equation}
Combining (\ref{eq: aaa}) with the triangle inequality, we arrive at
\begin{align*}
\rho(x',y) &\leq \rho(x',x) + \rho(x,y) \\
                      &= \varphi(x')-\varphi(x)+\varphi(x)-\varphi(y) \\
                      &= \varphi(x')-\varphi(y),
\end{align*}
which proves (\ref{lem: monotonicity ineq 2}).
We next prove (\ref{lem: monotonicity ineq 1}). By using the triangle inequality for $x', x$ and $y$, we obtain
\begin{align*}
\rho(x,y') &\leq \rho(x,y')+\rho(x',x)+\rho(x,y)-\rho(x',y)  \\
                      &= \rho(x,y)+\rho(x',y')-\rho(x',y) \\
                      &= \varphi(x)-\varphi(y)+\varphi(x')-\varphi(y')-(\varphi(x')-\varphi(y)) \\
                      &= \varphi(x)-\varphi(y').
\end{align*}
The proof is completed.
\end{proof}

Next, we construct a subset $\cl{O}_2(\mu_1,\mu_2)$ of $\cl{O}_1(\mu_1,\mu_2)$, which plays a crucial role in the proof of the main theorem.
We assume that the support of $\mu_1$ and $\mu_2$ is compact, respectively.
We denote by $\cl{P}_c(\Omega)$ the set of all compactly supported Borel probability measures on $\Omega$. 
Let $\varepsilon>0$ be a positive real number. We define a functional $C_{\varepsilon}\colon \cl{P}(\Omega^2)\to [0,+\infty]$ by
\[
C_{\varepsilon}(\pi):=\frac{1}{\varepsilon}W_1((\pr_2)_{\sharp}\pi,\mu_2) + \int_{\Omega^2}c_{\varepsilon}\, d\pi+\varepsilon^{3d+2}\#\supp((\pr_2)_{\sharp}\pi),
\]
where $d$ is the the base 2 logarithm of the doubling constant of $\supp\mu_2$ and $c_{\varepsilon}(x,y):=\rho(x,y)+\varepsilon\lvert x-y\rvert^2$, $x,y\in \Omega$.
The functional $C_{\varepsilon}$ is lower semi-continuous with respect to the weak convergence of measures.
We denote by $D_{\varepsilon}$ the set of all minimizers of $C_{\varepsilon}$ whose first marginal is $\mu_1$:
\[
D_{\varepsilon}:=\arg\min\left\{ C_{\varepsilon}(\pi) \setrel \pi\in \cl{P}(\Omega^2), (\pr_1)_{\sharp}\pi=\mu_1 \right\}.
\]
We observe that the set $D_{\varepsilon}$ is not empty.
$\cl{O}_2(\mu_1,\mu_2)\subset\Pi(\mu_1,\mu_2)$ is defined to be the set of all cluster points of any sequence $\{\pi_{\varepsilon}\in D_{\varepsilon}\}_{\varepsilon>0}$.
By the compactness of the support of $\mu_1$ and $\mu_2$, we see that $\cl{O}_2(\mu_1,\mu_2)$ is not empty.
Furthermore, we see that any measure in $\cl{O}_2(\mu_1,\mu_2)$ solves (\ref{svp1}).
\begin{lem}\label{well-definedness of O2}
For any $\varepsilon>0$, let $\pi_{\varepsilon}\in D_{\varepsilon}$.
Then the second marginal of  $\pi_{\varepsilon}$ converges to $\mu_2$ weakly as $\varepsilon\to 0$.
Moreover, every limit point $\pi_*$ of $\{\pi_{\varepsilon}\}_{\varepsilon>0}$ is an element of $\cl{O}_1(\mu_1, \mu_2)$. 
\end{lem}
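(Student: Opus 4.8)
The plan is to play the minimality of $\pi_\varepsilon$ off against a carefully discretized competitor, and then to read off, \emph{separately}, information about the penalty term $\frac1\varepsilon W_1$ and about the quadratic term $\varepsilon\int|x-y|^2\,d\pi$. First I would fix a solution $\sigma\in\cl{O}_1(\mu_1,\mu_2)$ of (\ref{svp1}); such a $\sigma$ exists because $\cl{O}_0(\mu_1,\mu_2)$ is weakly compact and $\pi\mapsto\int|x-y|^2\,d\pi$ is weakly lower semi-continuous. For each $\varepsilon$ I set $\delta_\varepsilon:=\varepsilon^3$, take $\cl{N}_\varepsilon\subset\supp\mu_2$ a maximal $\delta_\varepsilon$-discrete net for $\rho$ (so that it is also $\delta_\varepsilon$-dense), let $p_\varepsilon$ be a Borel nearest point projection onto $\cl{N}_\varepsilon$, and define the competitor $\sigma_\varepsilon:=(\id,p_\varepsilon\circ\pr_2)_\sharp\sigma$. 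Then $(\pr_1)_\sharp\sigma_\varepsilon=\mu_1$, so $\sigma_\varepsilon$ is admissible in the definition of $D_\varepsilon$, its second marginal is supported on $\cl{N}_\varepsilon$, and by Lemma \ref{lem: numberofnets} we have $\#\cl{N}_\varepsilon\le C^{\lceil\log_2(2D/\delta_\varepsilon)\rceil}\lesssim\delta_\varepsilon^{-d}=\varepsilon^{-3d}$, where $D$ is the $\rho$-diameter of $\supp\mu_2$.

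Next I would estimate both sides of $C_\varepsilon(\pi_\varepsilon)\le C_\varepsilon(\sigma_\varepsilon)$. On the competitor, $W_1((\pr_2)_\sharp\sigma_\varepsilon,\mu_2)\le\delta_\varepsilon$; the term $\int c_\varepsilon\,d\sigma_\varepsilon$ exceeds $\int\rho\,d\sigma+\varepsilon\int|x-y|^2\,d\sigma$ only by $\delta_\varepsilon+\varepsilon\cdot\mathrm{const}\cdot\omega(\delta_\varepsilon)$, the error coming from $\rho(x,p_\varepsilon(y))-\rho(x,y)$ and from a uniform modulus of continuity $\omega$ of the identity $(\Omega,\rho)\to(\Omega,|\cdot|)$ on the relevant compact set; and $\varepsilon^{3d+2}\#\cl{N}_\varepsilon\lesssim\varepsilon^2$. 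Since $\int\rho\,d\sigma=W_1(\mu_1,\mu_2)$ and $\delta_\varepsilon/\varepsilon=\varepsilon^2$, all remainders are $o(\varepsilon)$ and
\[
C_\varepsilon(\sigma_\varepsilon)\le W_1(\mu_1,\mu_2)+\varepsilon\int|x-y|^2\,d\sigma+o(\varepsilon).
\]
For the lower bound, writing $\nu_\varepsilon:=(\pr_2)_\sharp\pi_\varepsilon$ and using $\int\rho\,d\pi_\varepsilon\ge W_1(\mu_1,\nu_\varepsilon)\ge W_1(\mu_1,\mu_2)-W_1(\nu_\varepsilon,\mu_2)$ together with nonnegativity of the cardinality term gives
\[
C_\varepsilon(\pi_\varepsilon)\ge W_1(\mu_1,\mu_2)+\Bigl(\tfrac1\varepsilon-1\Bigr)W_1(\nu_\varepsilon,\mu_2)+\varepsilon\int|x-y|^2\,d\pi_\varepsilon.
\]

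The two assertions then follow by isolating the appropriate term. Dropping the quadratic term in the lower bound and comparing the two displays shows $(\tfrac1\varepsilon-1)W_1(\nu_\varepsilon,\mu_2)\le\varepsilon\int|x-y|^2\,d\sigma+o(\varepsilon)=O(\varepsilon)$, whence $W_1(\nu_\varepsilon,\mu_2)=O(\varepsilon^2)\to0$ and the second marginal converges to $\mu_2$ weakly. Dropping instead the nonnegative $W_1$ term, cancelling $W_1(\mu_1,\mu_2)$ and dividing by $\varepsilon$ gives $\int|x-y|^2\,d\pi_\varepsilon\le\int|x-y|^2\,d\sigma+o(1)$. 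Now if $\pi_*$ is a weak limit of some $\pi_{\varepsilon_k}$, then $(\pr_1)_\sharp\pi_*=\mu_1$ and, by the first part, $(\pr_2)_\sharp\pi_*=\mu_2$, so $\pi_*\in\Pi(\mu_1,\mu_2)$; lower semi-continuity of $\pi\mapsto\int\rho\,d\pi$ together with $\int\rho\,d\pi_{\varepsilon_k}\le C_{\varepsilon_k}(\pi_{\varepsilon_k})\le W_1(\mu_1,\mu_2)+o(1)$ yields $\int\rho\,d\pi_*\le W_1(\mu_1,\mu_2)$, hence $\pi_*\in\cl{O}_0(\mu_1,\mu_2)$; and lower semi-continuity of $\pi\mapsto\int|x-y|^2\,d\pi$ applied to the last inequality gives $\int|x-y|^2\,d\pi_*\le\int|x-y|^2\,d\sigma$, so $\pi_*$ solves (\ref{svp1}), i.e.\ $\pi_*\in\cl{O}_1(\mu_1,\mu_2)$.

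The main obstacle is the correct matching of scales. The cardinality penalty $\varepsilon^{3d+2}\#\cl{N}_\varepsilon$ must survive the division by $\varepsilon$ used in the quadratic comparison, and this is exactly where Lemma \ref{lem: numberofnets} and the doubling exponent $d$ enter: I need $\delta_\varepsilon=o(\varepsilon^2)$ to kill $\delta_\varepsilon/\varepsilon^2$, while simultaneously $\varepsilon^{3d+1}\delta_\varepsilon^{-d}\to0$ to kill the cardinality remainder after dividing by $\varepsilon$. The choice $\delta_\varepsilon=\varepsilon^3$ together with the tuned exponent $3d+2$ makes both remainders $O(\varepsilon)$, which is precisely what is used above. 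Verifying the $o(\varepsilon)$ replacement errors in $\int c_\varepsilon\,d\sigma_\varepsilon$ through the modulus $\omega$ relating $\rho$ and the Euclidean metric on the compact supports is routine but must be carried out with some care, since it is the only place the Euclidean quadratic term interacts with the projective metric $\rho$.
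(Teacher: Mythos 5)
Your proposal is correct and follows essentially the same route as the paper's proof: the same competitor $(\id\times p)_\sharp\sigma$ built from a maximal $\rho$-net of $\supp\mu_2$ with cardinality controlled by Lemma \ref{lem: numberofnets}, the same comparison $C_\varepsilon(\pi_\varepsilon)\le C_\varepsilon(\text{competitor})$ split against the lower bound via the $W_1$ triangle inequality, and the same scale matching (your $\delta_\varepsilon=\varepsilon^3$ is the paper's $j_\varepsilon\approx\varepsilon^{-3}$), finishing with lower semi-continuity. The only real difference is cosmetic: you fix the mesh as a function of $\varepsilon$ from the start and spell out the $\rho$-versus-Euclidean modulus of continuity in the quadratic error term, a point the paper leaves implicit.
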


\begin{proof}
For every natural number $j$, we take a maximal $(1/j)$-net $\cl{N}_j$ of  $\supp\mu_2$.
Put $d:=\log_2 C$. By Lemma \ref{lem: numberofnets}, we have
$\#\cl{N}_j\leq O(j^d)$ for any $j$. Since $\cl{N}_j$ is finite, there exists a Borel measurable nearest point projection
$p_j\colon \supp\mu_2\to\cl{N}_j$.
Pick and fix a transport plan $\pi\in \cl{O}_1(\mu_1,\mu_2)$. Set $\pi'_j:=(\id\times p_j)_{\sharp}\pi$, where the map $\id\times p_j$ is defined by
$(\id\times p_j)(x,x'):=(x,p_j(x'))$. The measure $\pi'_j$ is a transport plan from $\mu_1$ to $\mu_2$.
Since $\pi_{\varepsilon}\in D_{\varepsilon}$, we have
\begin{align}
C_{\varepsilon}(\pi_{\varepsilon}) 
&= \frac{1}{\varepsilon}W_1((\pr_2)_{\sharp}\pi_{\varepsilon},\mu_2)+\int_{\Omega^2}c_{\varepsilon}\, d\pi_{\varepsilon}+\varepsilon^{3d+2}\#\supp(\pr_2)_{\sharp}\pi_{\varepsilon} \label{ineq: masterineq} \\
&\leq \frac{1}{\varepsilon}W_1((\pr_2)_{\sharp}\pi'_j,\mu_2)+\int_{\Omega^2}c_{\varepsilon}\, d\pi'_j+\varepsilon^{3d+2}\#\supp(\pr_2)_{\sharp}\pi'_j. \nonumber
\end{align}
Since the map $p_j$ is a nearest point projection to $\mathcal{N}_j$, we have
\[
W_1((\pr_2)_{\sharp}\pi'_j, \mu_2)=W_1((p_j)_{\sharp}\mu_2,\mu_2) \leq \int_{\Omega}\rho(x,p_j(x))\, d\mu_2(x)\leq \frac1j
\]
and
\[
\#\supp(\pr_2)_{\sharp}\pi'_j=\supp(p_j)_{\sharp}\mu_2\leq \#\cl{N}_j,
\]
so we obtain
\[
C_{\varepsilon}(\pi_{\varepsilon})\leq \frac{1}{j\varepsilon}+\int_{\Omega^2}c_{\varepsilon}\, d\pi'_j + \varepsilon^{3d+2}\#\cl{N}_j.
\]
In particular, this yields that
\[
\frac{1}{\varepsilon}W_1((\pr_2)_{\sharp}\pi_{\varepsilon},\mu_2)\leq C_{\varepsilon}(\pi_{\varepsilon}) \leq \frac1{j\varepsilon} + \int_{\Omega^2}c_{\varepsilon}\, d\pi'_j +
\varepsilon^{3d+2}\#\cl{N}_j.
\]
By multiplying $\varepsilon>0$ and letting $\varepsilon\to 0$, we have
\[
\limsup_{\varepsilon\to 0}W_1((\pr_2)_{\sharp}\pi_{\varepsilon},\mu_2) \leq \frac1j.
\]
Since $j$ is arbitrary, $(\pr_2)_{\sharp}\pi_{\varepsilon}$ converges to $\mu_2$ weakly as 
$\varepsilon\to 0$.
By letting $j_{\varepsilon}\approx \varepsilon^{-2}$, (\ref{ineq: masterineq}) yields
\[
\int_{\Omega^2}\rho\, d\pi_{\varepsilon} 
\leq \int_{\Omega^2}c_{\varepsilon}\, d\pi_{\varepsilon}
\leq \int_{\Omega^2}\rho\, d\pi'_{j_{\varepsilon}}+O(\varepsilon).
\]
Letting $\varepsilon\to 0$ yields
\[
\int_{\Omega^2}\rho\, d\pi_* \leq \liminf_{\varepsilon\to 0}\int_{\Omega^2}\rho\, d\pi_{\varepsilon}\leq W_1(\mu_1,\mu_2).
\]
Since $\pi_*\in\Pi(\mu_1,\mu_2)$, we have $\pi_*\in \cl{O}_0(\mu_1,\mu_2)$. Moreover, by using 
\[
\int_{\Omega^2}\rho\, d\pi_{\varepsilon} \geq W_1(\mu_1,(\pr_2)_{\sharp}\pi_{\varepsilon})\geq W_1(\mu_1,\mu_2)-W_1(\mu_2,(\pr_2)_{\sharp}\pi_{\varepsilon}),
\]
\begin{align*}
\int_{\Omega^2}\rho(x,y)\, d\pi'_j 
&= \int_{\Omega^2}\rho(x,p_j(y))\, d\pi \\
&\leq \int_{\Omega^2}\rho(x,y)\, d\pi +\int_{\Omega^2}\rho(y,p_j(y)) \leq W_1(\mu_1,\mu_2)+\frac1j
\end{align*}
and (\ref{ineq: masterineq}), we get
\[
\frac{1-\varepsilon}{\varepsilon}W_1(\mu_2,(\pr_2)_{\sharp}\pi_{\varepsilon})+\varepsilon\int_{\Omega^2}\lvert x-y\rvert^2\, d\pi_{\varepsilon}
\leq \frac{1+\varepsilon}{j\varepsilon}+\varepsilon\int_{\Omega^2}\lvert x-y\rvert^2\, d\pi'_j +O(\varepsilon^{3d+2}).
\]
In particular, by letting $\varepsilon<1$ and $j_{\varepsilon}\approx\varepsilon^{-3}$, we obtain
\[
\int_{\Omega^2}\lvert x-y\rvert^2\, d\pi_{\varepsilon} \leq \int_{\Omega^2}\lvert x-y\rvert^2\, d\pi'_{j_{\varepsilon}} + O(\varepsilon^2),
\]
which leads us to 
\[
\int_{\Omega^2}\lvert x-y\rvert^2\, d\pi_* \leq \int_{\Omega^2}\lvert x-y\rvert^2\, d\pi.
\]
Since $\pi\in\cl{O}_1(\mu_1,\mu_2)$, we have $\pi_*\in\cl{O}_1(\mu_1,\mu_2)$.
\end{proof}

We consider restricted measures and interpolated measures.
For a Borel measure $\pi$ on $\Omega^2$ and a Borel set $B\subset \Omega^2$ of positive measure,
we denote by $\pi\lfloor_B$ the restriction of $\pi$ to $B$.
We interpolate measures along line segments in $\Omega$. For $0\leq t\leq 1$, we put
\[
P_t(x,y):=(1-t)x+ty,\, x,y\in\Omega.
\]
\begin{lem}\label{lem: densitybound}
Let $B\subset \Omega^2$ be a Borel set, let $\varepsilon>0$ and $\pi_{\varepsilon} \in D_{\varepsilon}$.
Denote by $\mu_1^{\varepsilon ,B}$ and $\mu_2^{\varepsilon,B}$ the first and the second marginal of $\pi_{\varepsilon}\lfloor_B$ respectively.
Then, $\mu_1^{\varepsilon, B}$ is absolutely continuous with respect to $\cl{L}^n$ and we have
the following {\rm (1), (2)} and {\rm (3)}.
\begin{enumerate}
\item The measure $\pi_{\varepsilon}\lfloor_B$ is a $c_{\varepsilon}$-optimal transport plan from $\mu_1^{\varepsilon, B}$ to
$\mu_2^{\varepsilon, B}$.
\item For any $t\in [0,1)$, the interpolated measure $(P_t)_{\sharp}(\pi_{\varepsilon}\lfloor_B)$ is absolutely continuous with respect to $\cl{L}^n$.
\item If the density $\rho_{\varepsilon, B}$ of $\mu_1^{\varepsilon,B}$ is essentially bounded, then
so is the density of $(P_t)_{\sharp}(\pi_{\varepsilon}\lfloor_{B})$. In this case, we have
\[
\left\|\frac{d(P_t)_{\sharp}(\pi_{\varepsilon}\lfloor_B)}{d\cl{L}^n}\right\|_{L^\infty} \leq (1-t)^{-n}\|\rho_{\varepsilon,B}\|_{L^\infty}.
\]
\end{enumerate}
\end{lem}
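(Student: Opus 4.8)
The plan is to read off as much as possible from the structure of the penalized functional $C_{\varepsilon}$ and then reduce everything to elementary affine interpolations. First, the absolute continuity of $\mu_1^{\varepsilon,B}$ is immediate: restricting a measure only decreases it on sets, so $\mu_1^{\varepsilon,B}(A)=(\pi_{\varepsilon}\lfloor_B)(A\times\Omega)\le\pi_{\varepsilon}(A\times\Omega)=\mu_1(A)$, whence $\mu_1^{\varepsilon,B}\le\mu_1\ll\cl{L}^n$. For (1) I would use that $C_{\varepsilon}(\pi_{\varepsilon})<+\infty$ (so in particular $\int c_{\varepsilon}\,d\pi_{\varepsilon}<+\infty$) and that, among all plans with first marginal $\mu_1$ and a fixed second marginal $\nu:=(\pr_2)_{\sharp}\pi_{\varepsilon}$, the terms $\varepsilon^{-1}W_1(\nu,\mu_2)$ and $\varepsilon^{3d+2}\#\supp\nu$ are constant; hence $\pi_{\varepsilon}$ minimizes $\pi\mapsto\int c_{\varepsilon}\,d\pi$ among such plans, i.e.\ it is a $c_{\varepsilon}$-optimal plan from $\mu_1$ to $\nu$. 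A standard sub-plan argument then gives (1): if some $\eta\in\Pi(\mu_1^{\varepsilon,B},\mu_2^{\varepsilon,B})$ had strictly smaller $c_{\varepsilon}$-cost than $\pi_{\varepsilon}\lfloor_B$, then $\eta+\pi_{\varepsilon}\lfloor_{\Omega^2\setminus B}$ would beat $\pi_{\varepsilon}$, a contradiction.

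The decisive structural observation is that the penalty $\varepsilon^{3d+2}\#\supp((\pr_2)_{\sharp}\pi)$ forces $\nu$, hence $\mu_2^{\varepsilon,B}$, to be \emph{finitely supported}, say $\mu_2^{\varepsilon,B}=\sum_{k=1}^{K}m_k\delta_{y_k}$. Disintegrating over this finite set, $\pi_{\varepsilon}\lfloor_B=\sum_{k}\sigma_k\otimes\delta_{y_k}$ with $\sum_k\sigma_k=\mu_1^{\varepsilon,B}$, so each $\sigma_k\le\mu_1^{\varepsilon,B}$ is absolutely continuous with density $g_k\le\rho_{\varepsilon,B}$ and $\sum_k g_k=\rho_{\varepsilon,B}$. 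Writing $A_{t,k}(x):=(1-t)x+ty_k$ (an affine homothety of ratio $1-t$, with constant Jacobian $(1-t)^n$), the interpolant is the finite sum $(P_t)_{\sharp}(\pi_{\varepsilon}\lfloor_B)=\sum_k (A_{t,k})_{\sharp}\sigma_k$. Each summand is the push-forward of an absolutely continuous measure under an affine bijection, hence absolutely continuous, and a finite sum of such measures is absolutely continuous; this proves (2). Moreover the change-of-variables formula gives the density of the $k$-th piece at a point $w$ as $g_k\!\big(\tfrac{w-ty_k}{1-t}\big)(1-t)^{-n}\le(1-t)^{-n}\|\rho_{\varepsilon,B}\|_{L^\infty}$.

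For the sharp bound in (3) I must prevent the $K$ pieces from overlapping, since otherwise their densities could stack; this is the main obstacle, and it is where the geometry enters. The hard part is to show that for $t\in(0,1)$ the map $P_t$ is \emph{injective on the $c_{\varepsilon}$-cyclically monotone set} $\Gamma$ carrying $\pi_{\varepsilon}\lfloor_B$. Suppose $(x,y),(x',y')\in\Gamma$ satisfy $P_t(x,y)=P_t(x',y')=:z$. Since $t\in(0,1)$, the point $z$ lies on both segments $[x,y]$ and $[x',y']$, and because line segments are geodesics (Assumption A(ii)) one has $\rho(x,y)=\rho(x,z)+\rho(z,y)$ and $\rho(x',y')=\rho(x',z)+\rho(z,y')$; combined with the triangle inequality this yields the \emph{reverse} inequality $\rho(x,y')+\rho(x',y)\le\rho(x,y)+\rho(x',y')$. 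Feeding this into the two-point $c_{\varepsilon}$-monotonicity inequality cancels all $\rho$-terms and leaves the Euclidean monotonicity $(x-x')\cdot(y-y')\ge0$. On the other hand the collision $(1-t)(x-x')=t(y'-y)$ forces $(x-x')\cdot(y-y')=-\tfrac{1-t}{t}|x-x'|^2$, which is negative unless $x=x'$; hence $x=x'$ and then $y=y'$. This is exactly the mechanism of Proposition \ref{lem: monotone}, applied here at an interior common point.

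Finally, injectivity makes the images $A_{t,k}\big(\Gamma\cap(\Omega\times\{y_k\})\big)$ pairwise disjoint: a common value $w$ would give $P_t(x,y_k)=P_t(x',y_{k'})$ with $(x,y_k),(x',y_{k'})\in\Gamma$, forcing $k=k'$. Consequently, for $\cl{L}^n$-almost every $w$ at most one summand of $\sum_k (A_{t,k})_{\sharp}\sigma_k$ contributes, so the per-piece estimate above upgrades to the global bound $\|d(P_t)_{\sharp}(\pi_{\varepsilon}\lfloor_B)/d\cl{L}^n\|_{L^\infty}\le(1-t)^{-n}\|\rho_{\varepsilon,B}\|_{L^\infty}$, completing (3). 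The two ingredients to single out are thus the finiteness of $\supp\mu_2^{\varepsilon,B}$ coming from the counting penalty, and the injectivity of $P_t$, which rests on the interplay between the strict convexity contributed by the term $\varepsilon|x-y|^2$ and the geodesic additivity of $\rho$ along segments.
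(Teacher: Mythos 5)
Your proof is correct and follows essentially the same route as the paper: restriction-stability of $c_{\varepsilon}$-optimality for (1), the finite support of the second marginal (forced by the counting penalty) together with the affine change of variables for (2), and pairwise disjointness of the interpolated slices, obtained from two-point $c_{\varepsilon}$-cyclical monotonicity at a collision point, for (3). Your treatment of the distance terms---cancelling them via the geodesic additivity of $\rho$ along segments and the triangle inequality so that only the strictly convex Euclidean quadratic part survives---is in fact a more carefully justified version of the paper's terse appeal to the ``strict convexity of $c_{\varepsilon}$'' at that same step.
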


\begin{proof}
The absolute continuity of $\mu_1^{\varepsilon,B}$ is clear.
The assertion (i) follows from the stability of optimality under the restriction of measures.
Let us prove (ii). If $t=0$, then the assertion is trivial. We may assume $t>0$.
Let $\{y_i\}_{i\in I}$ be the support of $\mu_2^{\varepsilon,B}$ and put $\Omega_i:=\supp(\pi_{\varepsilon}\lfloor_{\Omega\times\{y_i\}})$ and $\Omega_i(t):= P_t(\Omega\times \{y_i\})$. By a simple calculation, for any Borel set $A\subset \Omega$,
\begin{align}
(P_t)_{\sharp}(\pi_{\varepsilon}\lfloor_B)(A) \nonumber
&= \pi_{\varepsilon}\lfloor_B(P_t^{-1}(A)) \nonumber \\
&\leq \sum_{i\in I} \pi_{\varepsilon}\lfloor_B\left(P_t^{-1}(A\cap \Omega_i(t))\right) \nonumber \\
&= \sum_{i\in I} \pi_{\varepsilon}\lfloor_B\left(\frac{1}{1-t}(A\cap \Omega_i(t)-ty_i)\times \Omega\right) \nonumber \\
&= \sum_{i\in I}  \mu_1^{\varepsilon,B}\left(\frac{1}{1-t}(A\cap \Omega_i(t)-ty_i)\right) \nonumber \\
&\leq (1-t)^{-n}\sum_{i\in I} \|\rho_{\varepsilon,B}\|_{L^{\infty}}\mathcal{L}^n(A\cap\Omega_i(t)) \label{ineq: densitybound},
\end{align}
where we have used the translation invariance of $\cl{L}^n$ in the last inequality.
We now claim that the family $\{\Omega_i(t)\}_{i\in I}$ is pairwise disjoint for any $0<t<1$.
Assume the contrary.
Then, there exist two points $x_i,x_j\in \Omega$ such that
\[
(1-t)x_i+ty_i=(1-t)x_j+y_j.
\]
From this, we have
\begin{align*}
x_i-y_j
&= (1-t)x_j-(1-t)y_j+t(x_i-y_i) \\
&= (1-t)(x_j-y_j)+t(x_i-y_i)
\end{align*}
and
\[
c_{\varepsilon}(x_i,y_j) < (1-t)c_{\varepsilon}(x_j,y_j)+tc_{\varepsilon}(x_i,y_i)
\]
by the strict convexity of $c_{\varepsilon}$. Exchanging $i$ and $j$, we also have
\[
c_{\varepsilon}(x_j,y_i) < (1-t)c_{\varepsilon}(x_i,y_i)+tc_{\varepsilon}(x_j,y_j).
\]
On the other hand, the $c_{\varepsilon}$-cyclical monotonicity of $\supp\pi_{\varepsilon}$ yields
\[
c_{\varepsilon}(x_i,y_i)+c_{\varepsilon}(x_j,y_j)
\leq
c_{\varepsilon}(x_i,y_j)+c_{\varepsilon}(x_j,y_i).
\]
This is a contradiction and the assertion (iii) follows.
\end{proof}

\begin{lem}\label{lem: restriction}
Let $\pi_k,\pi\in \mathcal{P}(\Omega^2)$, $k=1,2,\ldots$, be Borel probability measures with a common first marginal.
If $\{\pi_k\}_{k=1}^{\infty}$ converges to $\pi$ weakly, then for any Borel set $G\subset \Omega$, the sequence $\{\pi_k\lfloor_{G\times \Omega}\}_{k=1}^{\infty}$ converges to $\pi\lfloor_{G\times \Omega}$ weakly.
\end{lem}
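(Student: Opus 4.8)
The plan is to verify the defining property of weak convergence directly: for every bounded continuous function $f\colon\Omega^2\to\mathbb{R}$ I would show
\[
\int_{G\times\Omega}f\, d\pi_k \longrightarrow \int_{G\times\Omega}f\, d\pi,
\]
since $\int f\, d(\pi_k\lfloor_{G\times\Omega}) = \int_{G\times\Omega}f\, d\pi_k$ and likewise for $\pi$. The obstruction is that the relevant integrand is really $\mathbf{1}_G(x)f(x,y)$, and the factor $\mathbf{1}_G$ is not continuous, so the weak convergence of $\{\pi_k\}$ cannot be applied to it directly. Note first that, writing $\mu_1$ for the common first marginal, every restricted measure has the same total mass $\pi_k(G\times\Omega)=\mu_1(G)=\pi(G\times\Omega)$, so all the measures in question are finite measures of one fixed mass.

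The key idea is to replace $\mathbf{1}_G$ by a continuous function at a cost controlled \emph{uniformly in $k$} by the common marginal. Given $\delta>0$, I would use the inner and outer regularity of the finite Borel measure $\mu_1$ to choose a closed set $C\subset G$ and an open set $U\supset G$ with $\mu_1(U\setminus C)<\delta$, and then, since $\Omega$ is metric, produce (by Urysohn's lemma, or explicitly via distance functions) a continuous $g\colon\Omega\to[0,1]$ with $\mathbf{1}_C\leq g\leq\mathbf{1}_U$. Then $\lvert g-\mathbf{1}_G\rvert\leq \mathbf{1}_{U\setminus C}$, so $\int_\Omega\lvert g-\mathbf{1}_G\rvert\, d\mu_1<\delta$. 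Because the first marginal of each $\pi_k$ and of $\pi$ equals $\mu_1$, this single estimate transfers verbatim: for all $k$,
\[
\int_{\Omega^2}\lvert \mathbf{1}_G(x)-g(x)\rvert\, d\pi_k = \int_\Omega\lvert \mathbf{1}_G-g\rvert\, d\mu_1 <\delta,
\]
together with the same bound with $\pi$ in place of $\pi_k$. This is precisely the step where the hypothesis of a common first marginal is essential.

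With this in hand I would finish by a triangle-inequality argument. The function $(x,y)\mapsto g(x)f(x,y)$ is bounded and continuous on $\Omega^2$, so weak convergence $\pi_k\to\pi$ gives $\int g(x)f(x,y)\, d\pi_k\to\int g(x)f(x,y)\, d\pi$. Combining this with the two uniform $L^1(\mu_1)$-estimates above, each error being bounded by $\lVert f\rVert_\infty\,\delta$ (using $0\le g\le 1$), I obtain
\[
\limsup_{k\to\infty}\left\lvert \int_{G\times\Omega}f\, d\pi_k-\int_{G\times\Omega}f\, d\pi\right\rvert\leq 2\lVert f\rVert_\infty\,\delta.
\]
Letting $\delta\to 0$ completes the proof. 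The only genuine obstacle is the discontinuity of $\mathbf{1}_G$; once it is resolved by the observation that the approximation error in $L^1(\mu_1)$ is the same for every $\pi_k$ and for $\pi$, the remainder is routine.
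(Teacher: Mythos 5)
Your proof is correct. The paper states this lemma without providing a proof, so there is nothing to compare it against; your argument is the standard one (it is essentially the restriction-stability argument used by Champion--De Pascale): approximate $\mathbf{1}_G$ by a continuous $g$ with $\mathbf{1}_C\leq g\leq\mathbf{1}_U$ using the regularity of the finite Borel measure $\mu_1$ on the metric space $\Omega$, observe that the $L^1$-error $\int_\Omega\lvert\mathbf{1}_G-g\rvert\,d\mu_1<\delta$ transfers to every $\pi_k$ and to $\pi$ because they share the first marginal $\mu_1$, apply weak convergence to the bounded continuous function $g(x)f(x,y)$, and conclude by the triangle inequality. All the ingredients you invoke (closed/open regularity of $\mu_1$, the Urysohn-type function, the marginal identity $\int_{\Omega^2}h(x)\,d\pi_k=\int_\Omega h\,d\mu_1$) are valid in this setting, so the proof is complete.
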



\begin{defn}
Let $\Gamma\subset \Omega^2$ be a subset.
We put 
\[
\mathbb{T}(\Gamma):=\left\{P_t(x,y)\setrel t\in [0,1], (x,y)\in \Gamma\right\}
\]
and call it \emph{the transport set associated to $\Gamma$}.
\end{defn}
Note that if $\Gamma$ is $\sigma$-compact, then so is $\mathbb{T}(\Gamma)$.
We also observe that if a measure $\lambda\in\mathcal{P}(\Omega^2)$ is concentrated on $\Gamma$, then
the measure $(P_t)_{\sharp}\lambda$ is concentrated on $\mathbb{T}(\Gamma)$ for any $t\in [0,1]$.
Now we recall the notion of density-regular points and regular points, introduced in \cite{CP11}.

\begin{defn}\label{def: densityregularity}
Let $\pi\in\cl{P}(\Omega^2)$ be a probability measure concentrated on a $\sigma$-compact set $\Gamma\subset \Omega^2$. 
Assume that the first marginal $\mu_1$ of $\pi$ is absolutely continuous with respect to $\cl{L}^n$ and let $f$ be the density.
A point $(x,y)\in \Gamma$ is said to be \emph{density-regular} if  for any $r>0$, there exists a point $\tilde{y}\in\Omega$ and a positive number $\tilde{r}>0$ such that 
\begin{itemize}
\item $y\in B(\tilde{y},\tilde{r})\subset B(y,r)$, 
\item The point $x$ is a Lebesgue point $f$ and $\tilde{f}$. 
\item $f(x)<+\infty$, $\tilde{f}(x)>0$,
\end{itemize}
where $\tilde{f}$ is the density of $(\pr_1)_{\sharp}(\pi\lfloor_{\Omega\times B(\tilde{y},\tilde{r})})$. We denote by $\dr(\Gamma)$ the set of all density-regular points of $\Gamma$.
\end{defn}

\begin{defn}\label{def: densityregularity}
Let $\Gamma\subset \Omega^2$ be a $\sigma$-compact set.
A point $(x,y)\in \Gamma$ is said to be \emph{regular} if for any $r>0$, $x$ is a Lebesgue point of $\Gamma^{-1}(B(y,r))$,
where the set $\Gamma^{-1}(B(y,r))$ is defined by
\[
\Gamma^{-1}(B(y,r)):=\left\{x \in \Omega \setrel (x,z)\in\Gamma\ \text{for some}\ z\in B(y,r)\right\}.
\]
\end{defn}

\begin{remark}
Any density-regular point is regular.
\end{remark}

\begin{lem}\label{lem: densityregularity}
Under the same settings as in Definition \ref{def: densityregularity},
$\pi$ is concentrated on $\dr(\Gamma)$.
\end{lem}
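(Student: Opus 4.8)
The plan is to exhibit a Borel set of full $\pi$-measure contained in $\dr(\Gamma)$; since $\pi$ is concentrated on the $\sigma$-compact, hence Borel, set $\Gamma$, it suffices to show that $\Gamma\setminus\dr(\Gamma)$ is contained in a $\pi$-null Borel set. The engine of the argument is the disintegration $\pi=\int_{\Omega}(\delta_x\otimes\pi_x)\,d\mu_1(x)$ of $\pi$ with respect to its first marginal $\mu_1$, which is available because $\Omega$, being a $G_{\delta}$-set, is Polish by Mazurkiewictz's theorem; here $x\mapsto\pi_x\in\cl{P}(\Omega)$ is $\mu_1$-measurable. For a ball $B\subset\Omega$ one computes, for every Borel $A\subset\Omega$,
\[
(\pr_1)_{\sharp}(\pi\lfloor_{\Omega\times B})(A)=\pi(A\times B)=\int_A \pi_x(B)f(x)\,d\cl{L}^n(x),
\]
so the density $\tilde f_B$ of $(\pr_1)_{\sharp}(\pi\lfloor_{\Omega\times B})$ is $x\mapsto f(x)\pi_x(B)$. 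This identity is what ties the positivity requirement $\tilde f(x)>0$ to the conditional mass $\pi_x(B)$.

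Next I would fix a countable family $\cl{B}$ of balls, those with centre in a fixed countable dense subset of $\Omega$ and rational radius, which enjoys the \emph{sandwiching property}: for every $y\in\Omega$ and every $r>0$ there is $B(\tilde y,\tilde r)\in\cl{B}$ with $y\in B(\tilde y,\tilde r)\subset B(y,r)$. Then I collect the relevant null sets. Since $f\in L^1(\cl{L}^n)$ and $\cl{L}^n$ is locally doubling by Assumption~A(iii), the Lebesgue differentiation theorem furnishes a Lebesgue-null set off which every $x$ is a Lebesgue point of $f$ with $f(x)<+\infty$ and, for each $B\in\cl{B}$, a Lebesgue point of $\tilde f_B$; let $N$ be the countable union of these null sets. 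Put $Z:=\{f=0\}$ and, for $B\in\cl{B}$, $E_B:=\{(x,y)\setrel y\in B,\ \pi_x(B)=0\}$, with $E:=\bigcup_{B\in\cl{B}}E_B$. A disintegration computation gives $\pi(N\times\Omega)=\mu_1(N)=0$ by absolute continuity, $\pi(Z\times\Omega)=\mu_1(Z)=\int_Z f\,d\cl{L}^n=0$, and $\pi(E_B)=0$ for each $B$ since on $\{\pi_x(B)=0\}$ the fibre of $E_B$ already carries no $\pi_x$-mass. Hence the Borel set $\mathrm{Bad}:=(N\times\Omega)\cup(Z\times\Omega)\cup E$ is $\pi$-null; the only measurability point to check is that $x\mapsto\pi_x(B)$ is Borel, so that each $E_B$ is Borel.

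Finally I would verify the inclusion $\Gamma\setminus\mathrm{Bad}\subset\dr(\Gamma)$. Given $(x,y)\in\Gamma\setminus\mathrm{Bad}$ and $r>0$, choose $B=B(\tilde y,\tilde r)\in\cl{B}$ with $y\in B\subset B(y,r)$ by the sandwiching property. Then $x$ is a Lebesgue point of $f$ and of $\tilde f=\tilde f_B$ with $f(x)<+\infty$ because $x\notin N$, while $\tilde f(x)=f(x)\pi_x(B)>0$ because $x\notin Z$ forces $f(x)>0$ and $(x,y)\notin E_B$ with $y\in B$ forces $\pi_x(B)>0$; thus $(x,y)$ is density-regular. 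The step I expect to be the main obstacle is exactly the positivity $\tilde f(x)>0$: it is the only condition not supplied directly by the differentiation theorem, and it is what forces the disintegration together with the observation that, off $E$, the point $y$ sees positive conditional mass in every ball of $\cl{B}$ around it, i.e.\ $y\in\supp\pi_x$. Everything else reduces to bookkeeping of countably many Lebesgue-null sets.
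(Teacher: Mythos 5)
The paper itself gives no proof of this lemma --- it is imported from Champion--De Pascale \cite{CP11} --- so there is no argument of the author's to compare against; judged on its own, your proof is correct. Your route is the standard one: a countable sandwiching family $\cl{B}$ of balls, Lebesgue differentiation for $f$ and for each marginal density $\tilde f_B$, and a countable union of null sets. The only genuinely different ingredient is how you obtain the positivity $\tilde f(x)>0$: you disintegrate $\pi=\int(\delta_x\otimes\pi_x)\,d\mu_1(x)$, identify $\tilde f_B$ with $f(\cdot)\,\pi_{\cdot}(B)$, and discard $\{f=0\}\times\Omega$ together with the sets $E_B$. This works, but it is heavier than necessary: since $(\pr_1)_{\sharp}(\pi\lfloor_{\Omega\times B})=\tilde f_B\,\cl{L}^n$, one has directly $\pi\bigl(\{\tilde f_B=0\}\times B\bigr)=\int_{\{\tilde f_B=0\}}\tilde f_B\,d\cl{L}^n=0$, which already gives $\tilde f_B(x)>0$ for $\pi$-a.e.\ $(x,y)$ with $y\in B$, with no disintegration and no need to remove $\{f=0\}$ (this is the argument in \cite{CP11}); what the disintegration buys you is the conceptual reformulation ``$y\in\supp\pi_x$ for $\pi$-a.e.\ $(x,y)$,'' at the cost of invoking the disintegration theorem and checking measurability of $x\mapsto\pi_x(B)$. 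One small bookkeeping point in your version: the identity $\tilde f_B=f\,\pi_{\cdot}(B)$ holds only $\cl{L}^n$-a.e., while the paper's density is defined as a pointwise $\limsup$ of ball averages, so you should enlarge $N$ by the null set, for each $B\in\cl{B}$, on which the $\limsup$-density differs from $f(x)\pi_x(B)$; after that the final verification goes through exactly as you wrote it.
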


\begin{prop}\label{prop: density regularity}
Let $\mu_1, \mu_2\in\mathcal{P}_c(\Omega)$ and let $\pi\in\mathcal{O}_2(\mu_1,\mu_2)$.
Assume that the measure $\mu_1$ is absolutely continuous with respect to $\cl{L}^n$ and let $f$ be the density of 
$\mu_1$.
Then, for any point $(x,y)\in \dr(\Gamma)$ with $x\neq y$ and positive real number $r>0$, 
\[
\liminf_{\delta\to0}\frac{\mathcal{L}^n\left(\mathbb{T}\left(\Gamma\cap(B(x,{\delta/4})\times B(y,r))\right)\cap B(x,{\delta})\right)}{\mathcal{L}^n\left(B(x,{\delta})\right)}>0.
\]
\end{prop}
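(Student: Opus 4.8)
The plan is to extract the required lower bound from the endpoint slice $t=0$ of the transport set, thereby reducing the whole estimate to a density statement for the set $E:=\Gamma^{-1}(B(y,r))$ at $x$. Writing $S_{\delta}:=\Gamma\cap(B(x,\delta/4)\times B(y,r))$, the definition of $\mathbb{T}$ gives $P_0(x',y')=x'$ for every $(x',y')\in S_{\delta}$, so
\[
\mathbb{T}(S_{\delta})\supseteq\pr_1(S_{\delta})=E\cap B(x,\delta/4).
\]
Since $E\cap B(x,\delta/4)\subseteq B(x,\delta/4)\subseteq B(x,\delta)$, this yields $\mathbb{T}(S_{\delta})\cap B(x,\delta)\supseteq E\cap B(x,\delta/4)$, and it remains only to bound $\cl{L}^n(E\cap B(x,\delta/4))$ from below by a fixed fraction of $\cl{L}^n(B(x,\delta))$.

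First I would record the measurability needed for the volumes to make sense: $\Gamma$ is $\sigma$-compact, hence $\Gamma\cap(\Omega\times B(y,r))$ and its projection $E$ are $\sigma$-compact, and (as noted after the definition of the transport set) so is $\mathbb{T}(S_{\delta})$; thus every set appearing is Borel. Next, $(x,y)\in\dr(\Gamma)$ is regular by the remark following the definition of regularity, so $x$ is a Lebesgue point of $\Gamma^{-1}(B(y,r))=E$; and $x\in E$ because $(x,y)\in\Gamma$ with $y\in B(y,r)$. The local doubling hypothesis {\rm (iii)} is precisely what makes the Lebesgue differentiation theorem available for $\rho$-balls, so this gives density $1$:
\[
\lim_{\delta\to0}\frac{\cl{L}^n(E\cap B(x,\delta/4))}{\cl{L}^n(B(x,\delta/4))}=1.
\]

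Finally I would compare the two balls by the doubling inequality (\ref{ineq: doubling}): taking $R$ larger than the radii in play and $C=C_R$ the corresponding doubling constant, $\cl{L}^n(B(x,\delta))\le C^{\lceil\log_2 4\rceil}\cl{L}^n(B(x,\delta/4))=C^2\cl{L}^n(B(x,\delta/4))$, so $\cl{L}^n(B(x,\delta/4))/\cl{L}^n(B(x,\delta))\ge C^{-2}$ for $\delta\le R$. Combining the three displays,
\[
\liminf_{\delta\to0}\frac{\cl{L}^n(\mathbb{T}(S_{\delta})\cap B(x,\delta))}{\cl{L}^n(B(x,\delta))}\ge\liminf_{\delta\to0}\frac{\cl{L}^n(E\cap B(x,\delta/4))}{\cl{L}^n(B(x,\delta/4))}\cdot C^{-2}=C^{-2}>0,
\]
which is the claim.

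The lower bound is therefore cheap, coming entirely from the $t=0$ slice; the only points demanding care are the measurability supplied by $\sigma$-compactness and the twofold use of {\rm (iii)}, namely to make sense of density points for $\rho$-balls and to bound the ball ratio. I expect the genuine difficulty to lie not here but in how this proposition feeds the subsequent uniqueness argument, where the hypothesis $x\neq y$ and the specific radius $\delta/4$ become essential: there one wants the \emph{forward} part ($t>0$) of $\mathbb{T}(S_{\delta})$ to occupy positive density, its segments emanating from $E\cap B(x,\delta/4)$ in directions close to $(y-x)/|x-y|$ and with lengths bounded below precisely because $x\neq y$. Establishing that stronger, genuinely $n$-dimensional spreading—rather than the endpoint bound above—is the main obstacle, and it is naturally controlled by an interpolation/absolute-continuity estimate in the spirit of Lemma \ref{lem: densitybound}, which keeps $(P_t)_{\sharp}(\pi\lfloor_{S_{\delta}})$ absolutely continuous with density $\le(1-t)^{-n}\|\cdot\|_{L^{\infty}}$ and so prevents the transported segments from collapsing onto a lower-dimensional set.
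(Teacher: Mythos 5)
Your argument does verify the inequality as literally stated, but it is a genuinely different route from the paper's, and the difference matters. You take only the $t=0$ slice of the transport set, $\mathbb{T}(\Gamma\cap(B(x,\delta/4)\times B(y,r)))\cap B(x,\delta)\supseteq \Gamma^{-1}(B(y,r))\cap B(x,\delta/4)$, and combine the Remark that density-regular points are regular (so $x$ has density one in $\Gamma^{-1}(B(y,r))$) with the ball comparison; note the balls here are Euclidean (the paper's final factor $2^{2n}$ shows this), so the ratio is exactly $4^{-n}$ and assumption (iii) is not even needed. The paper instead builds $F_{\delta}\subset B(x,\delta/4)$ from the Lebesgue points of $f$ and $\tilde f$, shows $(\pr_1)_{\sharp}(\pi\lfloor_{A_{\delta}})(F_{\delta})\geq \tfrac{\tilde f(x)}{4}\mathcal{L}^n(B(x,\delta/4))$, pushes forward by $P_t$ for a fixed $t\in\left(0,\delta/(2(\lvert x-y\rvert+r))\right)$, and bounds $\|(P_t)_{\sharp}(\pi\lfloor_{A_{\delta}})\|_{L^{\infty}}$ by $2^n(f(x)+1)$ via Lemma \ref{lem: densitybound} and Lemma \ref{lem: restriction} applied to approximating minimizers $\pi_k\in D_{\varepsilon_k}$; this is where $\pi\in\mathcal{O}_2(\mu_1,\mu_2)$, the absolute continuity of $\mu_1$, and $x\neq y$ actually enter. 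What that buys is that the positive density is realized by points $P_t(z,w)$ with $t>0$, i.e.\ by interior points of transport segments, which is precisely how the proposition is used in the proof of Theorem \ref{main theorem}: there one needs the selected point to lie on a segment $[x',y']$ with $x\neq x'$, so that $x-x'=a(y'-x')$ with $a>0$ and restricted monotonicity produces a contradiction. The endpoint slice carrying your whole estimate consists exactly of points with $x=x'$ (it sits inside $B(x_0,\delta/4)$), for which monotonicity gives nothing; so your proof, though formally correct for the displayed statement, empties the proposition of the content needed downstream --- indeed it shows the literal statement already follows from regularity alone, without any of the $\mathcal{O}_2$ machinery. Your closing paragraph correctly senses this: the ``genuinely $n$-dimensional spreading'' you defer to the uniqueness argument is exactly what the paper's proof of this proposition establishes, and in the paper's logic it must be established here, not later.
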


\begin{proof}
Define a Borel set $F\subset \Omega$ by
\[
F:=\left\{z\in \Omega \setrel {\tilde{f}(x)}/{2}\leq \tilde{f}(z)\right\} \cap 
\left\{z\in \Omega \setrel f(z)\leq f(x)+1\right\}.
\]
Since $x$ is a Lebesgue point of $f$ and $\tilde{f}$, we have
\[
\lim_{s\to 0}\frac{\mathcal{L}^n\left(F\cap B(x,s)\right)}{\mathcal{L}^n\left(B(x,s)\right)}=1.
\]
Fix a positive real number $\delta>0$ so that $\delta/2(\lvert x-y\rvert + r)<1$ and
\begin{equation}\label{ineq: lebesguediffthm}
\frac{\mathcal{L}^n\left(F\cap B(x,s)\right)}{\mathcal{L}^n\left(B(x,s)\right)}\geq \frac12
\end{equation}
hold for any $0<s<\delta$.
By putting $F_{\delta}:=B(x,{\delta/4})\cap F$, we have
\begin{equation}\label{ineq: diff}
\frac{\tilde{f}(x)}{4}\mathcal{L}^n\left(B(x,{\delta/4})\right)
\leq \frac{\tilde{f}(x)}{2}\mathcal{L}^n(F_{\delta}).
\end{equation}
Put $A_{\delta}:=F_{\delta}\times B(\tilde{y},{\tilde{r}})$ and let $f_{\delta}$ be the density of $(\pr_1)_{\sharp}\left(\pi\lfloor_{A_{\delta}}\right)$. By the definition of $\tilde{f}$ and $F_{\delta}$, we have
\begin{align*}
f_{\delta}(z) &= \limsup_{s\to 0}\frac{\pi\left((B(z,s)\cap F_{\delta})\times B(\tilde{y},{\tilde{r}})\right)}{\cl{L}^n\left(B(z,s)\right)} \\
&= \limsup_{s\to 0}\frac{1}{\cl{L}^n\left(B(z,s)\right)}\int_{B(z,s)\cap F_{\delta}} \tilde{f}\, d\cl{L}^n \\
&\geq \frac{\tilde{f}(x)}{2}\limsup_{s\to 0}\frac{\cl{L}^n\left(B(z,s)\cap F_{\delta}\right)}{\cl{L}^n\left(B(z,s)\right)}.
\end{align*}
Lebesgue's differentiation theorem tells
\[
f_{\delta}(z)\geq \frac{\tilde{f}(x)}{2}
\]
for $\mathcal{L}^n$-almost all $z\in F_{\delta}$.
Integrating this on $F_{\delta}$, we obtain
\begin{equation}\label{ineq: integrate}
\frac{\tilde{f}(x)}{2}\cl{L}^n(F_{\delta})\leq \int_{F_{\delta}}f_{\delta}(z)\, dz =(\pr_1)_{\sharp}(\pi\lfloor_{A_{\delta}})(F_{\delta}).
\end{equation}
We also observe that $P_t(z,w)\in B(x,\delta)$ for any $0<t<\delta/(2(\lvert x-y\rvert+r))$, $z\in B(x,\delta/4)$ and $w\in B(y,r)$, which yields
\begin{equation}\label{ineq: ball}
(\pr_1)_{\sharp}(\pi\lfloor_{A_{\delta}})(F_{\delta}) \leq (P_t)_{\sharp}(\pi\lfloor_{A_{\delta}})(B(x,{\delta})).
\end{equation}
Combining (\ref{ineq: diff}), (\ref{ineq: integrate}) and (\ref{ineq: ball}), we reach
\begin{equation}\label{ineq: interpolate}
\frac{\tilde{f}(x)}{4}\mathcal{L}^n\left(B(x,{\delta/4})\right)
\leq (P_t)_{\sharp}(\pi\lfloor_{A_{\delta}})(B(x,{\delta})).
\end{equation}

Since $\pi\in\mathcal{O}_2(\mu_1,\mu_2)$, there exist a sequence $\{\varepsilon_k\}_{k=1}^{\infty}$ of positive real numbers converging to $0$ and a sequence $\{\pi_{k}\in D_{\varepsilon_k}\}_{k=1}^{\infty}$ of  Borel probability measures converging to $\pi$ weakly.
From Lemma \ref{lem: densitybound} (iii), for any natural number $k$, we have
\begin{align*}
\|(P_t)_{\sharp}\left(\pi_{k}\lfloor_{F_{\delta}\times \Omega}\right)\|_{L^{\infty}} &\leq (1-t)^{-n}\|(\pr_1)_{\sharp}\left(\pi_k\lfloor_{F_{\delta}\times \Omega}\right)\|_{L^{\infty}} \\
&\leq (1-t)^{-n}\|f\lfloor_{F_{\delta}}\|_{L^{\infty}} \leq 2^{n}(f(x)+1).
\end{align*}
By Lemma \ref{lem: restriction}, we know that the sequence $\{\pi_{k}\lfloor_{F_{\delta}\times \Omega}\}_{k=1}^{\infty}$
 converges to $\pi\lfloor_{F_{\delta}\times \Omega}$ weakly. Since the $L^{\infty}$-bound is stable under the weak convergence, we obtain
\[
\|(P_t)_{\sharp}\left(\pi\lfloor_{A_{\delta}}\right)\|_{L^{\infty}}
\leq \|(P_t)_{\sharp}(\pi\lfloor_{F_{\delta}\times \Omega})\|_{L^{\infty}}
\leq 2^n(f(x)+1).
\]
The fact that
$(P_t)_{\sharp}(\pi\lfloor_{A_{\delta}})$ is concentrated on
$\mathbb{T}\left(\Gamma \cap (B(x,{\delta/4})\times B(y,r))\right)$ implies that
\begin{align*}
&(P_t)_{\sharp}(\pi\lfloor_{A_{\delta}})(B(x,{\delta})) \\
&= (P_t)_{\sharp}(\pi\lfloor_{A_{\delta}})\left(\mathbb{T}\left(\Gamma \cap (B(x,{\delta/4})\times B(y,r))\right) \cap B(x,{\delta})\right) \\
 &\leq 2^n(f(x)+1)\cl{L}^n\left(\mathbb{T}\left(\Gamma \cap (B(x,{\delta/4})\times B(y,r))\right) \cap B(x,{\delta})\right).
\end{align*}
Combining this with (\ref{ineq: interpolate}), we arrive at
\[
\frac{\cl{L}^n\left(\mathbb{T}\left(\Gamma\cap(B(x,{\delta/4})\times B(y,r))\right)\cap B(x,{\delta})\right)}
{\cl{L}^n\left(B(x,{\delta/4})\right)}\geq \frac{\tilde{f}(x)}{2^{n+2}(f(x)+1)},
\]
which is equivalent to 
\[
\frac{\cl{L}^n\left(\mathbb{T}\left(\Gamma\cap(B(x,{\delta/4})\times B(y,r))\right)\cap B(x,{\delta})\right)}
{\cl{L}^n\left(B(x,{\delta})\right)}\geq \frac{\tilde{f}(x)}{2^{3n+2}(f(x)+1)}.
\]
This completes the proof.
\end{proof}
The following theorem is the main result of this section and Theorem \ref{thm: main theorem} follows from this.
\begin{thm}\label{main theorem}
Let $\mu_1,\mu_2\in \mathcal{P}_c(\Omega)$.
If the measure $\mu_1$ is absolutely continuous with respect to $\mathcal{L}^n$, then $\mathcal{O}_2(\mu_1,\mu_2)$ consists of a single element $\pi_0$ and the transport plan $\pi_0$ is induced by a map. 
\end{thm}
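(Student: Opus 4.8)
Fix $\pi\in\cl{O}_2(\mu_1,\mu_2)$. By Lemma \ref{well-definedness of O2} it belongs to $\cl{O}_1(\mu_1,\mu_2)$, so by Lemma \ref{equivalence} and Proposition \ref{lem: monotone} it is concentrated on a $\sigma$-compact restrictedly monotone set; intersecting this with the density-regular and regular points and invoking Lemma \ref{lem: densityregularity}, we obtain a Borel set $\Gamma\subset\dr(\Gamma)$ of full $\pi$-measure. By Lemma \ref{lem: map} it suffices to show that for $\mu_1$-almost every $x$ there is a unique $y$ with $(x,y)\in\Gamma$. The diagonal part $\{(x,y)\in\Gamma: y=x\}$ contributes the identity and is harmless, so I focus on pairs with $x\neq y$ and split the task into (A) uniqueness of the transport direction $\tfrac{y-x}{|x-y|}$ and (B) uniqueness of $y$ along that direction.

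Step (A) is the heart of the matter. Suppose, for a set of $x$ of positive $\mu_1$-measure, there were $(x,y_1),(x,y_2)\in\dr(\Gamma)$ with $x\neq y_i$ and directions $v_1\neq v_2$. Choosing $r>0$ small enough that $B(y_1,r)$ and $B(y_2,r)$ lie in disjoint cones issuing from $x$, Proposition \ref{prop: density regularity} furnishes a constant $c>0$ such that, for all small $\delta$, the interpolated transport sets $\mathbb{T}\bigl(\Gamma\cap(B(x,\delta/4)\times B(y_i,r))\bigr)\cap B(x,\delta)$ have $\cl{L}^n$-measure at least $c\,\cl{L}^n(B(x,\delta))$ for $i=1,2$. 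The plan is to convert the coexistence of these two positive-density sets into a genuine crossing of segments of $\Gamma$: a point of the $v_2$-transport set lies on a segment $[z,w]$ with $z\in B(x,\delta/4)$ and $w\in B(y_2,r)$, and once such an interpolated point falls on the segment of a pair drawn from the $v_1$-transport set, the restricted monotonicity inequality $(y-y')\cdot(x-x')\geq 0$ applies and, letting $\delta\to0$, forces $v_1$ and $v_2$ to be positively proportional, a contradiction. \textbf{This geometric step is the main obstacle}: because $\rho$ may carry branching geodesics, collinearity of the directions actually used by $\pi$ is not automatic and must be extracted from the two density lower bounds through the monotonicity inequality. It is exactly here that Assumption A (ii), guaranteeing that segments are geodesics, and the doubling-dimension-tuned construction of $\cl{O}_2$, which produces the lower bound of Proposition \ref{prop: density regularity}, are indispensable.

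For Step (B), once a single direction $v(x)$ is available for $\mu_1$-almost every $x$, all pairs of $\Gamma$ through such an $x$ lie on one ray, and I would disintegrate $\pi$ along the family of transport rays. Membership in $\cl{O}_1(\mu_1,\mu_2)$ localizes the secondary quadratic minimization to each ray, so each conditional measure is a one-dimensional coupling minimizing $\int|x-y|^2$ between absolutely continuous marginals, the absolute continuity of the interpolants being supplied by Lemma \ref{lem: densitybound}. The unique minimizer of a one-dimensional transport problem with the strictly convex cost $s\mapsto s^2$ is the monotone map, hence single-valued almost everywhere; this rules out splitting of mass from a single $x$ and yields uniqueness of $y$. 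Together with Step (A) and Lemma \ref{lem: map}, this produces a Borel map $T$ with $\pi=(\id,T)_{\sharp}\mu_1$.

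It remains to see that $\cl{O}_2(\mu_1,\mu_2)$ is a singleton. Given $\pi,\pi'\in\cl{O}_2(\mu_1,\mu_2)$, induced by maps $T,T'$, their average lies in the convex set $\cl{O}_1(\mu_1,\mu_2)$ and is therefore restrictedly monotone. I would then rerun the argument of Step (A) with the two density lower bounds of Proposition \ref{prop: density regularity} applied to $\pi$ and to $\pi'$ separately and with the monotonicity inequality supplied by the average; this forces $v(x)=v'(x)$ for $\mu_1$-almost every $x$, after which Step (B) on the common rays gives $T=T'$. Hence $\cl{O}_2(\mu_1,\mu_2)=\{\pi_0\}$, completing the proof.
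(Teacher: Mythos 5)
Your opening reduction (via Lemma \ref{well-definedness of O2}, Lemma \ref{equivalence}, Proposition \ref{lem: monotone}, Lemma \ref{lem: densityregularity} and Lemma \ref{lem: map}) matches the paper, but the heart of the proof --- your Step (A) --- is left as an admitted ``main obstacle,'' and the route you sketch for it would not close. You propose to apply Proposition \ref{prop: density regularity} to \emph{both} balls $B(y_1,r)$ and $B(y_2,r)$ and to intersect the two resulting interpolated transport sets; but two sets that merely have positive lower Lebesgue density at $x$ need not meet in any $B(x,\delta)$, and even if they did, a point lying on two interpolated segments is not a configuration to which restricted monotonicity applies: the definition requires the intermediate point to be itself the \emph{first coordinate} of a pair in $\Gamma$. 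The paper's argument is deliberately asymmetric to avoid exactly this. Given $(x_0,y_0),(x_0,y_1)\in\dr(\Gamma)$ with $y_0\neq y_1$, one of $(y_1-y_0)\cdot(y_0-x_0)<0$ or $(y_0-y_1)\cdot(y_1-x_0)<0$ holds (their sum is $-\lvert y_1-y_0\rvert^2$); assuming the first, the \emph{regularity} of $(x_0,y_1)$ gives Lebesgue density $1$ at $x_0$ for the set of genuine source points $\pr_1\bigl(\Gamma\cap(\Omega\times B(y_1,r))\bigr)$, while Proposition \ref{prop: density regularity}, applied only to $(x_0,y_0)$, gives positive lower density for $\mathbb{T}\bigl(\Gamma\cap(B(x_0,\delta/4)\times B(y_0,r))\bigr)\cap B(x_0,\delta)$. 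A density-$1$ set and a positive-density set must intersect, producing a point $x$ with $(x,y)\in\Gamma$, $y\in B(y_1,r)$, and $x\in[x',y']$ for some $(x',y')\in\Gamma$ with $x'\in B(x_0,\delta/4)$, $y'\in B(y_0,r)$; restricted monotonicity then gives $(y-y')\cdot(x-x')\geq 0$, while the sign condition (made uniform on small balls) together with $x-x'=a(y'-x')$, $a>0$, gives the opposite strict inequality --- a contradiction with no limiting argument in $\delta$ and no ``positively proportional directions'' statement needed.

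Two further points. First, since the sign dichotomy above also separates two distinct targets $y_0\neq y_1$ lying on the same ray from $x_0$ (and handles the case $y_0=x_0$, which your ``diagonal is harmless'' remark skips), the paper's single argument already yields uniqueness of $y$; your Step (B) is therefore superfluous, and as written it rests on machinery nowhere established in the paper (disintegration of $\pi$ along transport rays, absolute continuity of the one-dimensional conditional measures, localization of the secondary quadratic minimization to rays --- Lemma \ref{lem: densitybound} controls the interpolants $(P_t)_{\sharp}$, not ray conditionals), which is precisely the Sudakov-type route the construction of $\cl{O}_2$ is designed to avoid. Second, your singleton argument inherits the gap, since it reruns Step (A) with two positive-density bounds; note also that Proposition \ref{prop: density regularity} is stated for elements of $\cl{O}_2$, so it does not apply directly to the average $\tfrac12(\pi+\pi')$, and any repair again needs the asymmetric density-$1$ versus positive-density pattern rather than an intersection of two positive-density sets.
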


\begin{proof}
Let $\pi\in\mathcal{O}_2(\mu_1,\mu_2)$. By virtue of Lemma \ref{lem: map} and Lemma \ref{lem: densityregularity}, it suffices to prove that the set $\dr(\Gamma)$ is concentrated on the graph of a map.
Assume that there exist two points $(x_0,y_0), (x_0,y_1)\in \dr(\Gamma)$ with $y_0\neq y_1$.
Then we have either $(y_1-y_0)\cdot (y_0-x_0)<0$ or $(y_0-y_1)\cdot (y_1-x_0)<0$. By symmetry, we may assume 
$(y_1-y_0)\cdot (y_0-x_0)<0$. By continuity, for sufficiently small $r>0$, any points $x\in B(x_0,r)$, $y'\in B(y_0,r)$ and $y\in B(y_1,r)$ satisfy
\[
(y-y')\cdot (y'-x)<0.
\]
On the other hand, by the $\Gamma$-regularity of $(x_0,y_1)\in \Gamma$, we have
\[
\lim_{\delta\to 0}\frac{\cl{L}^n\left(\pr_1(\Gamma \cap (\Omega \times B(y_1,r)))\cap B(x_0,{\delta})\right)}{\cl{L}^n(B(x_0,{\delta}))}=1
\]
and by the density-regularity of $(x_0,y_0)\in \dr(\Gamma)$, we have
\[
\liminf_{\delta\to 0}\frac{\cl{L}^n\left(\mathbb{T}(\Gamma\cap (B(x_0,{\delta/4}))\times B(y_0,r))\cap B(x_0,{\delta})\right)}{\cl{L}^n(B(x_0,{\delta}))}>0.
\]
Thus for sufficiently small $0<\delta<r$, there exists a point $x\in B(x_0,{\delta})$ such that
\[
x\in \pr_1(\Gamma \cap (\Omega \times B(y_1,r)))\cap \mathbb{T}(\Gamma\cap (B(x_0,{\delta/4}))\times B(y_0,r))).
\]
This implies the existence of points $y\in B(y_1,r)$, $x'\in B(x_0,{\delta/4})$ and $y'\in B(y_0,r)$ such that $(x,y), (x',y')\in \Gamma$ and $x\in [x',y']$. From the restricted monotonicity, we have
\[
(y-y')\cdot (x-x')\geq 0.
\]
Since $x$ lies in the line segment $[x',y']$, we have
\[
x-x'=a(y'-x')
\]
for some $a>0$. Thus we obtain
\[
(y-y')\cdot (x-x') = a(y-y')\cdot (y'-x)<0,
\]
which is a contradiction.
\end{proof}

\section{Hilbert geometries}
In this section, we recall the notion of Hilbert geometries, introduced by Hilbert \cite{Hilbert1895} himself.
We refer to \cite{Papa14} for details.

Let $\Omega\subset \mathbb{R}^n$ be a bounded convex open domain.
For any distinct two points $x,y\in \Omega$,
the \emph{Hilbert metric} $h_{\Omega}(x,y)$ from $x$ to $y$ is given by
\[
h_{\Omega}(x,y):=\frac{1}{2}\log\frac{\lvert y-x'\rvert\lvert x-y'\rvert}{\lvert x-x'\rvert\lvert y-y'\rvert},
\]
where the points $x'=x+s(y-x), s<0$ and $y'=x+t(y-x), t>0$ are the intersection of the affine straight line passing through $x$ and $y$ and the boundary $\partial \Omega$ (see Figure 1).
The function $h_{\Omega}$ defines a complete metric on $\Omega$ and the metric space $(\Omega, h_{\Omega})$ is called the \emph{Hilbert geometry} for $\Omega$.
The Hilbert geometry for a unit open disk coincides with the Cayley-Klein model of the hyperbolic geometry.
If the boundary $\partial \Omega$ is $C^2$-regular and strictly convex, then the Hilbert metric on $\Omega$ is induced by the smooth Finsler structure $F$, given by 
\[
F(x,v)=\frac{\lvert v\rvert}{2}\left(\frac{1}{\lvert x-a\rvert}+\frac{1}{\lvert x-b\rvert}\right),
\]
where the points $a=x+sv, s<0,$ and $b=x+tv, t>0$, are the intersection of the affine straight line passing through $x$ with the direction $v$ (see Figure 1). In this case, the Finsler manifold $(\Omega, F)$ has the constant flag curvature $-1$ (cf. \cite{Okada83}).
\begin{figure}[htbp]
\centering
\includegraphics[width=8cm]{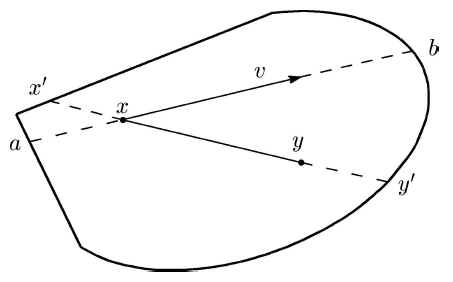}
\caption{Hilbert metric}
\end{figure}
For any bounded convex open set $\Omega$, the Hilbert metric $h_{\Omega}$ satisfies (i) and (ii) of Assumption A in  Section 1. 
To verify (iii), we use the following result, proved by Ohta \cite{Ohta13}.
\begin{thm}[\cite{Ohta13}]
Let $\Omega\subset \mathbb{R}^n$ be a bounded convex open domain and $h_{\Omega}$ the Hilbert metric.
Then the metric-measure space $(\Omega,h_{\Omega},\cl{L}^n)$ satisfies the curvature-dimension condition $\cd(K,N)$ for
\[
K=-(n-1)-\frac{(n+1)^2}{N-n}\ \text{and}\ N>n.
\]
\end{thm}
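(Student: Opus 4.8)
The plan is to pass to the smooth setting, identify the weighted Ricci curvature of the Finsler structure underlying $h_\Omega$, and reduce the curvature-dimension condition to a pointwise lower bound on that curvature.

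First I would reduce to the case that the boundary $\del\Omega$ is $C^2$-regular and strictly convex. For a general bounded convex open $\Omega$, choose an exhausting sequence of smooth strictly convex domains $\Omega_k$ with $\Omega_k\to\Omega$; the Hilbert metrics $h_{\Omega_k}$ converge locally uniformly to $h_\Omega$ and the Lebesgue measures converge as well, so $(\Omega_k,h_{\Omega_k},\cl{L}^n)\to(\Omega,h_\Omega,\cl{L}^n)$ in the measured Gromov--Hausdorff sense. Since the prescribed $K$ and $N$ do not depend on $\Omega$ and the condition $\cd(K,N)$ is stable under such convergence (Lott--Sturm--Villani), it suffices to treat the smooth strictly convex case. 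There, as recorded above, $h_\Omega$ is induced by the smooth Finsler structure $F$ and has constant flag curvature $-1$; moreover $F$ is projectively flat, i.e.\ every affine segment is a geodesic.

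Next I would invoke the Finslerian reformulation of the curvature-dimension condition: by Ohta's theorem, a Finsler metric-measure space $(\Omega,F,\mf{m})$ satisfies $\cd(K,N)$ if and only if its $N$-weighted Ricci curvature satisfies $\ric_N(v)\ge K$ for every unit vector $v$. Writing the weighted Ricci curvature along a unit-speed geodesic $\gamma$ as
\[
\ric_N(v)=\ric(v)+\ddot\Psi(0)-\frac{\dot\Psi(0)^2}{N-n},
\]
where $\Psi(t)=\log\bigl(d\,\mathrm{vol}_{\dot\gamma}/d\cl{L}^n\bigr)$ is the logarithmic density of the Finslerian volume form along $\gamma$ relative to $\cl{L}^n$, and using that constant flag curvature $-1$ forces $\ric(v)=-(n-1)$ (the trace of $n-1$ flags), the desired estimate becomes the pointwise inequality
\[
\dot\Psi(0)^2-(N-n)\ddot\Psi(0)\le (n+1)^2 .
\]

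The heart of the matter, and the step I expect to be the main obstacle, is the computation of the first- and second-order distortion terms $\dot\Psi,\ddot\Psi$ for the \emph{non-canonical} measure $\cl{L}^n$. The first-order term $\dot\Psi$ is the $S$-curvature of $(\Omega,F,\cl{L}^n)$, and here projective flatness is decisive: the Hilbert metric is the symmetrization of the Funk metric, whose $S$-curvature relative to its Busemann--Hausdorff measure is the constant multiple $\tfrac{n+1}{2}F$ (Shen), and tracking the passage to $h_\Omega$ and to Lebesgue measure yields the uniform bound $\lvert\dot\Psi\rvert\le n+1$. A one-dimensional check already calibrates the signs: there $\ric$ vanishes and $\dot\Psi=\rho'/\rho^2$ for the Hilbert density $\rho$, whence $\lvert\dot\Psi\rvert\to n+1$ only as $\gamma$ approaches $\del\Omega$, with $\ddot\Psi\to 0^+$ there, so that the inequality degenerates to equality precisely at the boundary while $\ddot\Psi$ supplies strictly positive slack in the interior. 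The remaining work is to establish $\dot\Psi^2-(N-n)\ddot\Psi\le (n+1)^2$ in every dimension; I would obtain it by differentiating the $S$-curvature along straight-line geodesics and exploiting the explicit form of $F$, the constancy of the Funk $S$-curvature, and the relation between $\ddot\Psi$ and the mean Berwald ($E$-)curvature. Feeding the resulting bound back gives $\ric_N(v)\ge -(n-1)-\tfrac{(n+1)^2}{N-n}=K$, completing the proof.
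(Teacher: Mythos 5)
First, a point of order: the paper you were given contains no proof of this statement --- it is quoted from Ohta \cite{Ohta13}, and Kobayashi only adds the one-line deduction that $\cd(K,N)$ implies the local doubling property needed for Assumption A. So your proposal can only be measured against Ohta's original argument, and in its architecture it does match that argument: reduce to smooth, strictly (indeed strongly) convex boundary and recover the general convex domain by approximation together with the stability of $\cd(K,N)$ under (pointed, since $(\Omega,h_\Omega)$ is complete and unbounded) measured Gromov--Hausdorff convergence; invoke Ohta's equivalence $\cd(K,N)\Leftrightarrow \ric_N\ge K$ for Finsler metric-measure spaces; take $\ric(v)=-(n-1)$ from constant flag curvature $-1$; and reduce everything to a bound on the distortion terms $\dot\Psi$, $\ddot\Psi$ of $\cl{L}^n$ against the Finsler volume along chords. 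Your sign conventions and the reduction to
\[
\dot\Psi(0)^2-(N-n)\,\ddot\Psi(0)\le (n+1)^2
\]
are correct.

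The genuine gap is that this last inequality --- which is the entire analytic content of \cite{Ohta13} --- is asserted rather than proven. To obtain it along the lines you sketch you would need both the $S$-curvature bound $\lvert \dot\Psi\rvert\le n+1$ and a lower bound (in effect nonnegativity) on $\ddot\Psi$, and neither follows from ``tracking the passage'' from the Funk metric: the $S$-curvature is not additive under the symmetrization $h_\Omega=\tfrac12(F_{\mathrm{Funk}}+\bar F_{\mathrm{Funk}})$, Shen's constancy statement for the Funk $S$-curvature refers to a different reference measure than the Lebesgue measure used here, and $\ddot\Psi$ has no a priori sign. Your one-dimensional calibration is precisely the degenerate case in which the transversal contribution is absent; in dimension $n\ge 2$ the density of $\cl{L}^n$ relative to the Finsler volume along a chord depends on the geometry of the whole domain through the chord's two boundary points, and Ohta's proof consists exactly of the explicit computation of this distortion along each straight chord, from which the bounds on $S$ and on its derivative are read off. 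As it stands, your proposal correctly frames the problem and identifies where the difficulty sits, but the step you yourself flag as ``the main obstacle'' is left open, so the argument is incomplete.
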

The curvature-dimension condition $\cd(K,N)$, which we do not explain in this paper, is a generalization of the condition that the weighted Ricci curvature is bounded below by $K$ and the dimension is bounded above by $N$.
If a metric-measure space $(X,d,\mathfrak{m})$ satisfies the curvature-dimension condition $\cd(K,N)$ for $K\in \mathbb{R}$ and finite $N>1$, then the base measure $\mathfrak{m}$ satisfies the local doubling property.
In particular, the Hilbert metrics satisfy (iii) of Assumption A, so we obtain Corollary \ref{cor: Hilb geom}.

\bibliographystyle{plain}
\begin{bibdiv}
\begin{biblist}

\bib{AKP04}{article}{
      author={Ambrosio, L.},
      author={Kirchheim, B.},
      author={Pratelli, A.},
       title={Existence of optimal transport maps for crystalline norms},
        date={2004},
        ISSN={0012-7094},
     journal={Duke Math. J.},
      volume={125},
      number={2},
       pages={207\ndash 241},
         url={https://doi.org/10.1215/S0012-7094-04-12521-7},
}

\bib{AG13}{incollection}{
      author={Ambrosio, Luigi},
      author={Gigli, Nicola},
       title={A user's guide to optimal transport},
        date={2013},
   booktitle={Modelling and optimisation of flows on networks},
      series={Lecture Notes in Math.},
      volume={2062},
   publisher={Springer, Heidelberg},
       pages={1\ndash 155},
         url={https://doi.org/10.1007/978-3-642-32160-3_1},
}

\bib{AP03}{incollection}{
      author={Ambrosio, Luigi},
      author={Pratelli, Aldo},
       title={Existence and stability results in the {$L^1$} theory of optimal
  transportation},
        date={2003},
   booktitle={Optimal transportation and applications ({M}artina {F}ranca,
  2001)},
      series={Lecture Notes in Math.},
      volume={1813},
   publisher={Springer, Berlin},
       pages={123\ndash 160},
         url={https://doi.org/10.1007/978-3-540-44857-0_5},
}

\bib{BiCa11}{incollection}{
      author={Bianchini, Stefano},
      author={Cavalletti, Fabio},
       title={The {M}onge problem in geodesic spaces},
        date={2011},
   booktitle={Nonlinear conservation laws and applications},
      series={IMA Vol. Math. Appl.},
      volume={153},
   publisher={Springer, New York},
       pages={217\ndash 233},
         url={https://doi.org/10.1007/978-1-4419-9554-4_10},
}

\bib{BiCa13}{article}{
      author={Bianchini, Stefano},
      author={Cavalletti, Fabio},
       title={The {M}onge problem for distance cost in geodesic spaces},
        date={2013},
        ISSN={0010-3616},
     journal={Comm. Math. Phys.},
      volume={318},
      number={3},
       pages={615\ndash 673},
         url={https://doi.org/10.1007/s00220-013-1663-8},
}

\bib{Cava17}{incollection}{
      author={Cavalletti, Fabio},
       title={An overview of {$L^1$} optimal transportation on metric measure
  spaces},
        date={2017},
   booktitle={Measure theory in non-smooth spaces},
      series={Partial Differ. Equ. Meas. Theory},
   publisher={De Gruyter Open, Warsaw},
       pages={98\ndash 144},
}

\bib{CP11}{article}{
      author={Champion, Thierry},
      author={De~Pascale, Luigi},
       title={The {M}onge problem in {$\Bbb R^d$}},
        date={2011},
        ISSN={0012-7094},
     journal={Duke Math. J.},
      volume={157},
      number={3},
       pages={551\ndash 572},
         url={https://doi.org/10.1215/00127094-1272939},
}

\bib{Hilbert1895}{article}{
      author={Hilbert, David},
       title={Ueber die gerade linie als k{\"u}rzeste verbindung zweier
  punkte},
        date={1895Mar},
        ISSN={1432-1807},
     journal={Mathematische Annalen},
      volume={46},
      number={1},
       pages={91\ndash 96},
         url={https://doi.org/10.1007/BF02096204},
}

\bib{Kantorovich}{article}{
      author={Kantorovitch, L.},
       title={On the translocation of masses},
        date={1958},
        ISSN={0025-1909},
     journal={Management Sci.},
      volume={5},
       pages={1\ndash 4},
         url={https://doi.org/10.1287/mnsc.5.1.1},
}

\bib{Monge1781}{book}{
      author={Monge, Gaspard},
       title={M{\'e}moire sur la th{\'e}orie des d{\'e}blais et des remblais},
   publisher={De l'Imprimerie Royale},
        date={1781},
}

\bib{Ohta13}{article}{
      author={Ohta, Shin-ichi},
       title={Weighted {R}icci curvature estimates for {H}ilbert and {F}unk
  geometries},
        date={2013},
        ISSN={0030-8730},
     journal={Pacific J. Math.},
      volume={265},
      number={1},
       pages={185\ndash 197},
         url={https://doi.org/10.2140/pjm.2013.265.185},
}

\bib{Okada83}{article}{
      author={Okada, Tsutomu},
       title={On models of projectively flat {F}insler spaces of constant
  negative curvature},
        date={1983},
        ISSN={0040-3504},
     journal={Tensor (N.S.)},
      volume={40},
      number={2},
       pages={117\ndash 124},
}

\bib{Papa14}{book}{
      author={Papadopoulos, Athanase},
       title={Metric spaces, convexity and non-positive curvature},
     edition={Second},
      series={IRMA Lectures in Mathematics and Theoretical Physics},
   publisher={European Mathematical Society (EMS), Z\"{u}rich},
        date={2014},
      volume={6},
        ISBN={978-3-03719-132-3},
         url={https://doi.org/10.4171/132},
}

\bib{San09}{article}{
      author={Santambrogio, Filippo},
       title={Absolute continuity and summability of transport densities:
  simpler proofs and new estimates},
        date={2009},
        ISSN={0944-2669},
     journal={Calc. Var. Partial Differential Equations},
      volume={36},
      number={3},
       pages={343\ndash 354},
         url={https://doi.org/10.1007/s00526-009-0231-8},
}

\bib{Vil09}{book}{
      author={Villani, C\'{e}dric},
       title={Optimal transport},
      series={Grundlehren der Mathematischen Wissenschaften [Fundamental
  Principles of Mathematical Sciences]},
   publisher={Springer-Verlag, Berlin},
        date={2009},
      volume={338},
        ISBN={978-3-540-71049-3},
         url={https://doi.org/10.1007/978-3-540-71050-9},
        note={Old and new},
}

\bib{Wil70}{book}{
      author={Willard, Stephen},
       title={General topology},
   publisher={Addison-Wesley Publishing Co., Reading, Mass.-London-Don Mills,
  Ont.},
        date={1970},
}

\end{biblist}
\end{bibdiv}

\end{document}